\documentclass[12pt]{amsart}
\usepackage{a4wide}
\usepackage{enumerate}
\usepackage{graphicx}
\usepackage{color}
\usepackage{amsmath}
\usepackage{scrextend} 
\allowdisplaybreaks

\let\pa\partial  
\let\na\nabla  
\let\eps\varepsilon  
\newcommand{\N}{{\mathbb N}}  
\newcommand{\R}{{\mathbb R}} 
\newcommand{\diver}{\operatorname{div}}

\newcommand{\DD}{{\mathcal D}}

\newtheorem{theorem}{Theorem}   
\newtheorem{lemma}[theorem]{Lemma}   
   
\newtheorem{remark}[theorem]{Remark}

 
\begin{document}  

\title[Degenerate cross-diffusion system for ion transport]{Analysis of a degenerate
parabolic cross-diffusion system for ion transport}

\author{Anita Gerstenmayer}
\address{Institute for Analysis and Scientific Computing, Vienna University of  
	Technology, Wiedner Hauptstra\ss e 8--10, 1040 Wien, Austria}
\email{anita.gerstenmayer@tuwien.ac.at} 
\author{Ansgar J\"ungel}
\address{Institute for Analysis and Scientific Computing, Vienna University of  
	Technology, Wiedner Hauptstra\ss e 8--10, 1040 Wien, Austria}
\email{juengel@tuwien.ac.at} 

\date{\today}

\thanks{The authors acknowledge partial support from   
the Austrian Science Fund (FWF), grants P27352, P30000, and W1245, and from
the Austrian-French project of the Austrian Exchange Service (\"OAD), grant
FR 04/2016} 

\begin{abstract}
A cross-diffusion system describing ion transport through biological
membranes or nanopores in a bounded domain with mixed Dirichlet-Neumann
boundary conditions is analyzed. The ion concentrations solve strongly coupled
diffusion equations with a drift term involving the electric potential
which is coupled to the concentrations through a Poisson equation.
The global-in-time existence of bounded weak solutions and
the uniqueness of weak solutions under moderate regularity assumptions are shown. 
The main difficulties of the analysis are the cross-diffusion terms 
and the degeneracy of the diffusion matrix, preventing the use of standard tools.
The proofs are based on the boundedness-by-entropy method, extended to
nonhomogeneous boundary conditions, and the uniqueness
technique of Gajewski. A finite-volume discretization in one space dimension 
illustrates the large-time behavior of the numerical solutions and shows that
the equilibration rates may be very small.
\end{abstract}

\keywords{Ion transport, existence of weak solutions, free energy, entropy method,
uniqueness of weak solutions, finite-volume approximation.}  
 
\subjclass[2000]{35K51, 35K65, 35Q92.}  

\maketitle


\section{Introduction}

The transport of ions through membranes or nanopores can be described on the
macroscopic level by the Poisson-Nernst-Planck equations, modeling ionic species
and an electro-neutral solvent in the self-consistent field \cite{Ner88}. 
The equations can be derived
in the mean-field limit from microscopic particle models \cite{NSSA04} and lead to
diffusion equations, satisfying Fick's law for the fluxes. 
This ansatz breaks down in narrow ion channels
if the finite size of the ions is taken into account.
Including size exclusion, the mean-field model, derived from an on-lattice model
in the diffusion limit \cite{BSW12,SLH09} or taking into account the combined effect
of the excess chemical potentials \cite{LiEi14}, leads to parabolic equations with
cross-diffusion terms. The aim of this paper is to analyze the 
cross-diffusion system of \cite{BSW12}.

\subsection{Model equations}

The evolution of the ion concentrations (volume fractions) $u_i$ and fluxes $J_i$
of the $i$th species is governed by the equations
\begin{equation}\label{1.eq}
  \pa_t u_i = \diver J_i, \quad J_i = D_i\big(u_0\na u_i - u_i\na u_0 
	+ u_0u_i(\beta z_i\na\Phi+\na W_i)\big)
\end{equation}
for $i=1,\ldots,n$, where $u_0=1-\sum_{i=1}^nu_i$ is the concentration
(volume fraction) of the
solvent. We have assumed that the molar masses are the same for all species.
Varying molar masses are considered in, e.g., \cite{ChJu15,DGM12} in the
context of the Maxwell-Stefan theory. The classical Nernst-Planck equations
are obtained after setting $u_0=1$ \cite{CBE92}. 
They can be also coupled with fluiddynamical equations; see, e.g., \cite{XSL14}.
Modified Nernst-Planck models without volume filling, but including 
cross-diffusion terms, were suggested and analyzed in \cite{HHLLL15,LiEi15}.
  
In equations \eqref{1.eq}, $D_i>0$ denotes the diffusion coefficients,
$\beta=q/(k_B\theta)>0$ is the inverse thermal voltage (or inverse thermal energy)
with the elementary charge $q$, the Boltzmann constant $k_B$, 
and the temperature $\theta$, 
$z_i\in\R$ is the valence of the $i$th species, and $W_i=W_i(x)$ is an 
external potential.
Note that Einstein's relation between the diffusivity $D_i$ and the
mobility $\mu_i=qD_i/(k_B\theta)=D_i\beta$ holds. 
The electrical potential $\Phi$ is determined by the Poisson equation
\begin{equation}\label{1.poi}
  -\lambda^2\Delta\Phi = \sum_{i=1}^nz_iu_i + f,
\end{equation}
where $\lambda>0$ is the (scaled) permittivity, $\sum_{i=1}^nz_iu_i$ is the total 
charge density, and $f=f(x)$ is a permanent charge density. 

Equations \eqref{1.eq}-\eqref{1.poi} are solved in the bounded domain 
$\Omega\subset\R^d$ ($d\ge 1$). Its boundary is supposed to consist of
an insulating part $\Gamma_N$, on which no-flux boundary conditions are prescribed, 
and the union $\Gamma_D$ of boundary contacts with external reservoirs, 
on which the concentrations are fixed. The electric potential is influenced by 
the voltage at $\Gamma_E$ between two electrodes, and we assume for simplicity that
$\Gamma_E=\Gamma_D$. This leads to the mixed Dirichlet-Neumann
boundary conditions
\begin{align}
  J_i\cdot\nu=0
	\mbox{ on }\Gamma_N, &\quad u_i=u_i^D\mbox{ on }\Gamma_D, \quad i=1,\ldots,n,
	\label{1.bc1} \\
	\na\Phi\cdot\nu=0\mbox{ on }\Gamma_N, &\quad \Phi=\Phi^D\mbox{ on }\Gamma_D.
	\label{1.bc2}
\end{align}
Finally, we prescribe the initial conditions
\begin{equation}\label{1.ic}
  u_i(\cdot,0) = u_i^0\quad\mbox{in }\Omega,\ i=1,\ldots,n.
\end{equation}

Equations \eqref{1.eq} can be written as the cross-diffusion system
\begin{equation}\label{1.eqA}
  \pa_t u_i = \diver\bigg(\sum_{j=1}^n A_{ij}(u)\na u_j + D_iu_0u_i\na F_i\bigg),
\end{equation}
where $F_i=\beta z_i\Phi+W_i$ is the effective potential and the diffusion
matrix $(A_{ij}(u))$ is defined by
$$
  A_{ii}(u) = D_iu_i, \quad A_{ij}(u) = D_i(u_0+u_i), \quad j\neq i.
$$
Mathematically, this system is strongly coupled with a nonsymmetric and
generally not positive semidefinite diffusion matrix such that the
existence of solutions to \eqref{1.eqA} is not trivial. A second difficulty
is the fact that a maximum principle is generally not available for 
cross-diffusion systems, and the proof of nonnegativity of 
$u_0=1-\sum_{i=1}^n u_i$ is unclear. The third problem arises due to the
degenerate structure hidden in the equations (see below for details).

For vanishing
potentials $F_i=0$, the global existence of bounded weak solutions to \eqref{1.eqA}
with no-flux boundary conditions has been shown in \cite{ZaJu17}, based on
the boundedness-by-entropy method \cite{Jue15,Jue16}. The existence of
weak solutions to the (easier) stationary problem was proved in \cite{BSW12}.
Related models were analyzed recently in \cite{BBRW17}.
No existence or uniqueness results for solutions to 
the full transient model \eqref{1.eq}-\eqref{1.ic}
seem to be available in the literature and in this paper, we fill this gap.
Compared to the works \cite{Jue15,ZaJu17}, the novelty here is the inclusion of
the electric potential and the mixed Dirichlet-Neumann boundary conditions,
which need to be treated in a careful way.

\subsection{Key idea of the analysis}

We extend the boundedness-by-entropy method \cite{Jue15}
to the case of nonconstant potentials and nonhomogeneous boundary conditions. 
The key observation, already stated
in \cite{BSW12}, is that \eqref{1.eq} possesses an entropy or gradient-flow
structure. The entropy or, more precisely,
free energy is given by 
\begin{align}
  & H(u) = \int_\Omega h(u)dx, \quad\mbox{where }u=(u_1,\ldots,u_n), \label{1.H} \\
	& h(u) = \sum_{i=0}^n\int_{u_i^D}^{u_i}\log\frac{s}{u_i^D}ds 
	+ \frac{\beta\lambda^2}{2}|\na(\Phi-\Phi^D)|^2
	+ \sum_{i=1}^n u_iW_i \nonumber
\end{align}
and $u_0^D=1-\sum_{i=1}^n u_i^D$. The free energy is bounded from below if
$u_i\in L^\infty(\Omega)$ and $W_i\in L^1(\Omega)$.
Equations \eqref{1.eqA} can be written as a formal gradient flow in the sense
\begin{equation}\label{1.eqB}
  \pa_t u_i = \diver\bigg(\sum_{j=1}^n B_{ij}\na w_j\bigg),
	\quad i=1,\ldots,n,
\end{equation}
where $B_{ii}=D_iu_0u_i$, $B_{ij}=0$ if $i\neq j$ provide a diagonal positive
semidefinite matrix $(B_{ij})$, and $w_j$ are the 
entropy variables, defined by
\begin{align}
  & \frac{\pa h}{\pa u_i} = w_i-w_i^D, \quad\mbox{where} \nonumber \\
	& w_i = \log\frac{u_i}{u_0}	+ \beta z_i\Phi + W_i, \quad 
	w_i^D = \log\frac{u_i^D}{u_0^D} + \beta z_i\Phi^D, \quad i=1,\ldots,n.
	\label{1.w}
\end{align}
We refer to Lemma \ref{lem.ev} below for the computation of $\pa h/\pa u_i$.
In thermodynamics $\pa h/\pa u_i$ is called the chemical potential of the
$i$th species. 
The advantage of formulation \eqref{1.eqB} is that the drift terms are eliminated and,
in this special case, the new diffusion matrix $(B_{ij})$ is diagonal.
Note that we have not included the boundary data into the formulation
\eqref{1.eqB}. In fact, the free energy is nonincreasing
along trajectories to \eqref{1.eq}-\eqref{1.ic} only if the boundary data are 
in equilibrium, i.e.\ if $\na w^D_i=0$. In the general case, the free energy
is bounded only; see \eqref{1.epi} below.

There is another important benefit of formulation \eqref{1.eqB}. 
Observing that the relation between $w=(w_1,\ldots,w_n)$ and $u=(u_1,\ldots,u_n)$ 
can be inverted explicitly according to
$$
  u_i = u_i(w) = \frac{\exp(w_i-\beta z_i\Phi-W_i)}{1+\sum_{j=1}^n
	\exp(w_j-\beta z_j\Phi-W_j)}, \quad i=1,\ldots,n,
$$
we see that, if $(w_1,\ldots,w_n,\Phi)$ is a solution to \eqref{1.poi} and
\eqref{1.eqB},
$$
  u_i(w) \in\DD := \bigg\{u=(u_1,\ldots,u_n)\in(0,1)^n:\sum_{i=1}^n u_i<1\bigg\}.
$$
This provides positive lower {\em and} upper bounds for the concentrations
$u_0,\ldots,u_n$ without the use of a maximum principle.

\subsection{Main results}

We prove (i) the global-in-time existence of bounded weak solutions, (ii) the
uniqueness of weak solutions under additional regularity assumptions, and (iii)
some numerical results on the large-time behavior of solutions in one space dimension. 
In the following, we detail these results. First, we specify the
technical assumptions.

\begin{labeling}{(A44)}
\item[(A1)] Domain: $\Omega\subset\R^d$ ($d\ge 1$) is a bounded domain with
$\pa\Omega=\Gamma_D\cup\Gamma_N\in C^{0,1}$, $\Gamma_D\cap\Gamma_N=\emptyset$,
$\Gamma_N$ is open in $\pa\Omega$, and $\mbox{meas}(\Gamma_D)>0$.

\item[(A2)] Parameters: $T>0$, $D_i$, $\beta>0$, and $z_i\in\R$,
$i=1,\ldots,n$.

\item[(A3)] Given functions: $f\in L^\infty(\Omega)$, $W_i\in H^1(\Omega)\cap
L^\infty(\Omega)$, 
and $W_i=0$ on $\Gamma_D$, $\na W_i\cdot\nu=0$ on $\Gamma_N$,
$i=1,\ldots,n$.

\item[(A4)] Initial and boundary data: $u_i^0\in L^\infty(\Omega)$,
$u_i^D\in H^1(\Omega)$, $u_i^0>0$, $u_i^D > 0$, $1-\sum_{i=1}^n u_i^0>0$, $1-\sum_{i=1}^n u_i^D>0$
in $\Omega$ for $i=1,\ldots,n$, 
and $\Phi^D\in H^1(\Omega)\cap L^\infty(\Omega)$ satisfies
$$
  -\lambda^2\Delta\Phi^D = f\quad\mbox{in }\Omega, \quad
	\na\Phi^D\cdot\nu = 0 \quad\mbox{on }\Gamma_N.
$$
\end{labeling}

Clearly, it is sufficient to define the functions $u_i^D$, $\Phi^D$
on $\Gamma_D$. By the extension property, they can be extended to $\Omega$,
and we assume in (A4) that the extension of $\Phi^D$ is done in a special way.
This extension is needed to be consistent with the definition of the free
energy (entropy) and the entropy variables; see Lemma \ref{lem.ev}.
We denote these extensions again by $u_i^D$, $\Phi^D$.
Furthermore, we introduce the space \cite{Tro87}
$$
  H^1_D(\Omega)=\{u\in H^1(\Omega):u=0\mbox{ on }\Gamma_D\}.
$$
The first result concerns the existence of bounded weak solutions.

\begin{theorem}[Global existence of weak solutions]\label{thm.ex}
Let Assumptions (A1)-(A4) hold. Then there exists a bounded weak solution
$u_1,\ldots,u_n:\Omega\times(0,T)\to\overline{\DD}$ to \eqref{1.eq}-\eqref{1.ic}
satisfying 
\begin{align*}
	& u_iu_0^{1/2},\ u_0^{1/2}\in L^2(0,T;H^1(\Omega)), \quad
  \pa_t u_i\in L^2(0,T;H^1_D(\Omega)'), \\
	& \Phi\in L^2(0,T;H^1(\Omega)), \quad i=1,\ldots,n,
\end{align*}
and the weak formulation
\begin{align}
  &\int_0^T\langle\pa_t u_i,\phi_i\rangle dt + D_i\int_0^T\int_\Omega
	u_0^{1/2}\big(\na(u_0^{1/2}u_i) - 3u_i\na u_0^{1/2}\big)\cdot\na\phi_i dxdt 
	\nonumber \\
	&\phantom{xx}{}
	+ D_i\int_0^T\int_\Omega u_i u_0(\beta z_i\na\Phi+\na W_i)\cdot\na\phi_i dxdt = 0, 
	\label{1.weak1} \\
	& \lambda^2\int_0^T\int_\Omega\na\Phi\cdot\na\theta dxdt
	= \int_0^T\int_\Omega\bigg(\sum_{i=1}^n z_iu_i + f\bigg)\theta dxdt, \label{1.weak2}
\end{align}
for all $\phi_i$, $\theta\in L^2(0,T;H^1_D(\Omega))$, $i=1,\ldots,n$.
The initial condition is satisfied in the sense of $H^1_D(\Omega)'$,
and the Dirichlet boundary conditions are given by 
$$
  u_0=u_0^D:=1-\sum_{i=1}^n u_i^D, \quad u_iu_0^{1/2}=u_i^D(u_0^D)^{1/2}
	\quad\mbox{on }\Gamma_D,\ i=1,\ldots,n,
$$ 
in the sense of traces in $L^2(\Gamma_D)$.
\end{theorem}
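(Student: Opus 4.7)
The plan is to adapt the boundedness-by-entropy framework of \cite{Jue15,ZaJu17} to the present Dirichlet-Neumann setting with Poisson coupling. Fix $N\in\N$, set $\tau=T/N$, and choose $m\in\N$ with $m>d/2$ so that $H^m(\Omega)\hookrightarrow L^\infty(\Omega)$. At time step $k$, given $u^{k-1}:\Omega\to\overline{\DD}$, I seek $w^k\in w^D+H^m_D(\Omega)^n$ and $\Phi^k\in\Phi^D+H^1_D(\Omega)$ solving the regularized weak problem
$$\frac{1}{\tau}\int_\Omega(u_i(w^k,\Phi^k)-u_i^{k-1})\phi_i\,dx + D_i\int_\Omega u_0^k u_i^k\na w_i^k\cdot\na\phi_i\,dx + \eps\langle w_i^k,\phi_i\rangle_{H^m}=0$$
for every $\phi_i\in H^m_D(\Omega)$, coupled with the weak Poisson equation for $\Phi^k$. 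The explicit inversion $u=u(w,\Phi)\in\overline{\DD}$ supplies free $L^\infty$ bounds, so no maximum principle is needed. Existence at each time step follows from a Leray-Schauder fixed point: freeze the coefficients $u_0 u_i$ in $B$ and the source in Poisson, solve the resulting linear elliptic systems, and use $H^m\hookrightarrow C^0$ for compactness.

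Next I test with $\phi_i=w_i^k-w_i^D$. By convexity of $h$ and Lemma \ref{lem.ev}, $h(u^k)-h(u^{k-1})\le\sum_i(w_i^k-w_i^D)(u_i^k-u_i^{k-1})$, and absorbing the cross terms $\int u_0 u_i\na w_i\cdot\na w_i^D$ by Young's inequality (possible because $\na w_i^D\in L^2$, thanks to (A3)-(A4) and the particular harmonic extension of $\Phi^D$) yields the discrete entropy inequality
$$H(u^N)+\tau\sum_{k=1}^N\sum_i D_i\int_\Omega u_0^k u_i^k|\na w_i^k|^2\,dx+\eps\tau\sum_k\|w^k\|_{H^m}^2\le H(u^0)+C(T).$$
Inserting $\na w_i=\na u_i/u_i-\na u_0/u_0+\beta z_i\na\Phi+\na W_i$ and invoking the algebraic identity
$$\sum_{i=1}^n u_0u_i|\na\log(u_i/u_0)|^2 \ge c_1\bigl(|\na u_0^{1/2}|^2+\sum_i|\na(u_0^{1/2}u_i)|^2\bigr),$$
as in \cite{ZaJu17}, and controlling the $\na\Phi$, $\na W_i$ contributions via the Poisson bound and $u_i\in[0,1]$, delivers uniform (in $\tau,\eps$) bounds on $u_0^{1/2}$, $u_i u_0^{1/2}$ in $L^2(0,T;H^1(\Omega))$, on $\Phi$ in $L^2(0,T;H^1(\Omega))$, and on the discrete time derivative of $u_i$ in the appropriate dual space via the equation itself.

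For the limit, a discrete Aubin-Lions lemma (Dreher-J\"ungel type) provides strong $L^2$-convergence of the piecewise-constant interpolants $u_i^{(\tau)} u_0^{(\tau)\,1/2}$ and $u_0^{(\tau)\,1/2}$, hence a.e.\ convergence of $u_i^{(\tau)}$ and, by the $L^\infty$ bound, strong $L^p$-convergence for any $p<\infty$. The Poisson equation then yields $\Phi^{(\tau)}\to\Phi$ strongly in $L^2(0,T;H^1)$. Rewriting the flux as
$$J_i = D_i u_0^{1/2}\bigl(\na(u_0^{1/2}u_i)-3u_i\na u_0^{1/2}\bigr) + D_i u_i u_0(\beta z_i\na\Phi+\na W_i),$$
I let $\eps\to 0$ and $\tau\to 0$: the parenthesized factor converges weakly in $L^2$, the prefactor $u_0^{1/2}$ strongly in $L^\infty_t L^2_x$, and the drift term is handled analogously, recovering \eqref{1.weak1}. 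The regularization $\eps\langle w,\phi\rangle_{H^m}$ vanishes via the bound $\eps\tau\sum_k\|w^k\|_{H^m}^2\le C$. The Dirichlet traces $u_0^{1/2}=(u_0^D)^{1/2}$ and $u_i u_0^{1/2}=u_i^D(u_0^D)^{1/2}$ on $\Gamma_D$, and the initial datum in $H^1_D(\Omega)'$, are inherited from these convergences via the continuous trace operator.

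The main obstacle is the identification carried out in the middle step. Because the diffusion matrix $(B_{ij})=\mathrm{diag}(D_i u_0 u_i)$ degenerates whenever any component of $u$ vanishes, no standard coercivity on $\na u_i$ or $\na u_0$ is available, and only the specific logarithmic form of the entropy variables converts the formal entropy production into usable $H^1$ control on $u_0^{1/2}$ and $u_i u_0^{1/2}$. These are exactly the quantities required to make sense of the rewritten degenerate flux $u_0\na u_i-u_i\na u_0=u_0^{1/2}\bigl(\na(u_0^{1/2}u_i)-3u_i\na u_0^{1/2}\bigr)$ in the limit weak formulation; without them the flux would have no meaning in any function space. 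Aligning these two facts with the nonhomogeneous boundary contribution to the entropy (which changes the natural test function from $w_i$ to $w_i-w_i^D$ and forces the specific extension of $\Phi^D$ in (A4)) is the heart of the proof.
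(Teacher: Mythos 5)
Your overall architecture --- implicit Euler discretization with an $\eps$-regularization in $H^m(\Omega)$, Leray--Schauder at each time step, testing with $w^k-w^D$ to obtain the discrete entropy inequality, extracting $H^1$ bounds on $u_0^{1/2}$ and $u_iu_0^{1/2}$, and passing to the limit in the rewritten flux --- is exactly the paper's proof. However, the compactness step contains a genuine gap. You invoke ``a discrete Aubin--Lions lemma (Dreher--J\"ungel type)'' to get strong $L^2$ convergence of $u_i^{(\tau)}(u_0^{(\tau)})^{1/2}$. The standard discrete Aubin--Lions lemma of \cite{DrJu12} requires a uniform bound on the discrete time derivative of the quantity being compactified; here such a bound is available for $u_i^{(\tau)}$ and $u_0^{(\tau)}$ separately (in $L^2(0,T;H^1_D(\Omega)')$, read off from the equation), but \emph{not} for the product $u_i^{(\tau)}(u_0^{(\tau)})^{1/2}$, so that lemma does not apply. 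The paper instead uses the ``degenerate'' Aubin--Lions lemma of \cite{BDPS10,Jue15}, whose hypotheses are precisely the gradient bounds on $(u_0^{(\tau)})^{1/2}$ and $u_i^{(\tau)}(u_0^{(\tau)})^{1/2}$, the time-derivative bound on $u_i^{(\tau)}$ alone, and the previously established strong convergence of $(u_0^{(\tau)})^{1/2}$; this is the one tool that identifies the limit of the product with $u_iu_0^{1/2}$, where $u_i$ is only the weak-$*$ limit. Relatedly, your deduction ``hence a.e.\ convergence of $u_i^{(\tau)}$ and, by the $L^\infty$ bound, strong $L^p$-convergence'' is false: on the set $\{u_0=0\}$ the products $u_i^{(\tau)}(u_0^{(\tau)})^{1/2}$ carry no information about $u_i^{(\tau)}$, and the paper explicitly records that only weak-$*$ convergence of $u_i^{(\tau)}$ in $L^\infty$ is available --- this is the very degeneracy the whole construction is built around. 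Your limit passage survives only because you ultimately pass to the limit in products of the form (strongly convergent factor) times (weakly convergent gradient), but the intermediate claims as stated are not correct and cannot be used elsewhere (e.g.\ to upgrade the convergence of $\Phi^{(\tau)}$).

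A second, smaller point: the two limits must be taken sequentially, first $\eps\to0$ at fixed $\tau$ (for each fixed time level $k$) and only then $\tau\to0$. As long as the $\eps$-term is present, the equation only controls the discrete time derivative of $u_i$ in the dual of $H^1_D(\Omega)\cap H^m(\Omega)$, which is too weak for either version of the Aubin--Lions argument; removing $\eps$ first upgrades this to the bound \eqref{2.utau} in $H^1_D(\Omega)'$. Your phrase ``I let $\eps\to 0$ and $\tau\to 0$'' glosses over this ordering, and a simultaneous limit would not close.
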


The proof is based on an approximation procedure, i.e.,
we prove first the existence of solutions $u_0^{(\tau)}$, $u_i^{(\tau)}$
to a regularized problem with approximation parameter $\tau>0$ and then
pass to the limit $\tau\to 0$. The estimates needed for the compactness
argument are coming from a discrete version of 
the entropy-production inequality (for simplicity, we omit the superindex $\tau$)
\begin{align}
  \frac{dH}{dt} 
	&= \int_\Omega\sum_{i=1}^n\pa_t u_i(w_i-w_i^D)dx
	= -\int_\Omega\sum_{i=1}^n D_iu_0u_i\na w_i\cdot\na(w_i-w_i^D)dx \nonumber \\
	&\le -\frac12\int_\Omega\sum_{i=1}^n D_iu_0u_i|\na w_i|^2 dx + C(w^D),
	\label{1.epi}
\end{align}
where the constant $C(w^D)>0$ depends on the $H^1(\Omega)$ norm of $w^D$. 
We show in \eqref{2.sqrt} below that
$$
  \sum_{i=1}^n u_iu_0\na\log\frac{u_i}{u_0} = 4u_0\sum_{i=1}^n|\na u_i^{1/2}|^2
	+ |\na u_0|^2 + 4|\na u_0^{1/2}|^2,
$$
which yields an $H^1(\Omega)$ estimate for $u_0^{1/2}$ but {\em not} for $u_i$
because of the factor $u_0\ge 0$. This reflects the degenerate nature of the
equations which is more apparent in the component-wise formulation
$\pa_t u_i=\diver(D_iu_0u_i\na w_i)$ (see \eqref{1.eqB}). 

To overcome this
degeneracy, we employ the technique developed in \cite{BDPS10,ZaJu17}.
We show that $(u_0^{(\tau)}u_i^{(\tau)})$ is bounded in $H^1(\Omega)$ and that
the (approximative) time derivative of $u_i^{(\tau)}$ is bounded in $H^1_D(\Omega)'$.
If $u_0^{(\tau)}$ was strictly positive, we could apply the Aubin-Lions lemma
to conclude strong convergence of (a subsequence of) $(u_i^{(\tau)})$ to some $u_i$
which solves \eqref{1.eq}. However, since $u_0^{(\tau)}$ may vanish in the
limit,
this lemma cannot be used. The idea is to compensate the lack of the 
gradient estimates for $u_i^{(\tau)}$ by exploiting the uniform estimates
for $u_0^{(\tau)}$. Then, by the ``degenerate'' Aubin-Lions lemma
(see, e.g., \cite[Appendix C]{Jue15}), (a subsequence of)
$(u_0^{(\tau)}u_i^{(\tau)})$ converges strongly to $u_0u_i$,
and $u_0$, $u_i$ solve \eqref{1.eq}. For details, see Section \ref{sec.ex}.

\begin{remark}\rm
1.~Theorem \ref{thm.ex} also holds when reaction terms $f_i(u)$ are introduced
on the right-hand side of \eqref{1.eq}. As in \cite{Jue15}, we need that
$f_i$ is continuous and $\sum_{i=1}^n f_i(u)(\pa h/\pa u_i)\le C(1+h(u))$
holds for some $C>0$ and all $u\in\DD$.

2.~The approximate solution satisfies a discrete version
of the entropy-production inequality; see \eqref{2.epi}.
As explained above, the sequence $(u_i^{(\tau)})$ may not converge strongly,
such that we are unable to perform the limit $\tau\to 0$ in \eqref{2.epi}. 
As a consequence, we cannot prove that the
free energy \eqref{1.H} is nonincreasing along trajectories of 
\eqref{1.eq}-\eqref{1.poi}, and the analysis of the large-time behavior seems
to be inaccessible. Therefore, we investigate the decay of $H(u)$ numerically;
see Section \ref{sec.num}.

3.~Since the Neumann boundary condition does not appear explicitly in the weak
formulation \eqref{1.weak1}-\eqref{1.weak2}, we do not need to make expressions like
$\na\Phi\cdot\nu=0$ on $\Gamma_N$ precise. We only mention along the way that terms 
like $\na\Phi\cdot\nu$ on $\Gamma_N$ have to be understood in the sense of
$H_{00}^{1/2}(\Gamma_N)'$ which is the dual space of $H_{00}^{1/2}(\Gamma_N)$
consisting of all functions $v$ on $\Gamma_N$ such that $v\in H^1_D(\Omega)$.
This space is larger than $H^{-1/2}(\Gamma_N)$. We refer to \cite[Chapter 18]{BaCa84} 
for details.
\qed
\end{remark}

The second result is the uniqueness of weak solutions.

\begin{theorem}[Uniqueness of weak solutions]\label{thm.unique}
Let Assumptions (A1)-(A4) hold, $\sum_{i=1}^n W_i\in L^\infty(0,T;W^{1,d}(\Omega))$, 
and let $D_i=1$ and $z_i=z\in\R$ for $i=1,\ldots,n$. Then there exists at most one
bounded weak solution to \eqref{1.eq}-\eqref{1.ic} in the class of functions
$u_i\in H^1(0,T;H_D^1(\Omega)')\cap L^2(0,T;H^1(\Omega))$, 
$\Phi\in L^\infty(0,T;W^{1,q}(\Omega))$ with $q>d$.
\end{theorem}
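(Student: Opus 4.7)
The plan is to apply Gajewski's semi-metric technique, adapted here to handle nonhomogeneous Dirichlet data, the Poisson coupling, and the external potentials. Let $(u,\Phi)$ and $(v,\Psi)$ be two bounded weak solutions sharing the same initial and Dirichlet data, and set $\bar u_i=\tfrac12(u_i+v_i)$ for $i=0,\ldots,n$. Because $z_i\equiv z$ and both potentials share their Dirichlet trace, linearity of \eqref{1.poi} implies that the potential generated by $\bar u$ is exactly $\bar\Phi=\tfrac12(\Phi+\Psi)$. I would introduce the Gajewski-type functional
$$
\mathcal{E}(t) = \int_\Omega\sum_{i=0}^n\bigl(u_i\log u_i+v_i\log v_i-2\bar u_i\log \bar u_i\bigr)\,dx+\frac{\beta\lambda^2}{4}\int_\Omega|\nabla(\Phi-\Psi)|^2\,dx.
$$
A short calculation shows that $\mathcal{E}(t)=H(u)+H(v)-2H(\bar u)$ for $H$ from \eqref{1.H}: the contributions carrying $u_i^D$ and $W_i$ are linear in $u_i$ and cancel in the combination, while the parallelogram identity applied to $\nabla(\Phi-\Phi^D)$, $\nabla(\Psi-\Phi^D)$, $\nabla(\bar\Phi-\Phi^D)$ produces the second term. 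Strict convexity of $s\mapsto s\log s$ gives $\mathcal{E}(t)\ge 0$ with equality iff $u=v$ and $\Phi=\Psi$ a.e., and $\mathcal{E}(0)=0$ since the initial data agree.

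Next I would differentiate $\mathcal{E}$ by testing the weak formulation \eqref{1.weak1} for $u_i$ against the Bregman derivative $\phi^u_i=\log(u_i/u_0)-\log(\bar u_i/\bar u_0)+\tfrac{\beta z}{2}(\Phi-\Psi)$, and the equation for $v_i$ against $\phi^v_i=\log(v_i/v_0)-\log(\bar u_i/\bar u_0)+\tfrac{\beta z}{2}(\Psi-\Phi)$. Both vanish on $\Gamma_D$ and thus lie in $L^2(0,T;H_D^1(\Omega))$; admissibility of the logarithms, which can blow up where $u_i$ or $u_0$ touches $0$, would be secured by truncating $\log$ and passing to the limit, exploiting $u_i,v_i\in[0,1]$. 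Summing over $i=1,\ldots,n$, using $\sum_i\partial_t u_i=-\partial_t u_0$ to combine the $\log u_0$- and $\log v_0$-terms, and rewriting the contribution $(\Phi-\Psi)\,\partial_t(v_0-u_0)$ via \eqref{1.poi} and an integration by parts in space as $\tfrac{\lambda^2}{2z}\partial_t|\nabla(\Phi-\Psi)|^2$, the time-derivative part collapses exactly into $\tfrac{d}{dt}\mathcal{E}(t)$. Integrating in $t$ puts $\mathcal{E}(T)$ on the left-hand side.

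What remains is the flux pair
$$
\int_0^T\!\!\int_\Omega\sum_{i=1}^n\bigl(u_0u_i\nabla\omega^u_i\cdot\nabla\phi^u_i+v_0v_i\nabla\omega^v_i\cdot\nabla\phi^v_i\bigr)\,dx\,dt,
$$
where $\omega^u_i=\log(u_i/u_0)+\beta z\Phi+W_i$ is the entropy variable from \eqref{1.w} at $u$, analogously for $v$, $\bar u$. Writing $\phi^u_i=\omega^u_i-\omega^{\bar u}_i$ and $\phi^v_i=\omega^v_i-\omega^{\bar u}_i$ and expanding, one isolates a sign-definite principal dissipation (quadratic in $\nabla\omega^u_i,\nabla\omega^v_i$ with coefficients $u_0u_i,v_0v_i$, in analogy with \eqref{1.epi}), which can be dropped when bounding $\mathcal{E}(T)$ from above. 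The leftover remainder pairs $\nabla\Phi,\nabla\Psi$, and $\nabla\sum_jW_j$ against log-gradients of $u_i/u_0$, $v_i/v_0$ and against $\nabla(\Phi-\Psi)$. The main obstacle is to bound this remainder by $C\,\mathcal{E}(t)$: this is precisely where the hypotheses $\Phi,\Psi\in L^\infty(0,T;W^{1,q}(\Omega))$ with $q>d$ and $\sum_jW_j\in W^{1,d}(\Omega)$ enter, through H\"older's inequality with conjugate exponents $q$ and $q'$, Sobolev embedding of $\nabla(\Phi-\Psi)\in H^1_D(\Omega)$ into a space matching the conjugate exponent, and a Csisz\'ar--Kullback-type inequality controlling $\|u_i-v_i\|_{L^1}^2$ by the entropic part of $\mathcal{E}(t)$. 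The resulting differential inequality $\mathcal{E}(T)\le C\int_0^T\mathcal{E}(t)\,dt$ together with $\mathcal{E}(0)=0$ forces $\mathcal{E}\equiv 0$ by Gronwall's lemma, so that $u=v$ and $\Phi=\Psi$ almost everywhere.
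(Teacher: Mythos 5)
Your overall framework (a Gajewski-type semimetric augmented by an electrostatic term, tested against Bregman derivatives of the entropy variables) is in the right spirit, but there is a genuine gap at the central step: the claimed sign-definiteness of the ``principal dissipation''. In your unified functional the mobility in front of $\nabla\omega_i^u$ is $u_0u_i$, which is \emph{quadratic} in the unknowns, since $u_0=1-\sum_j u_j$ differs between the two solutions. The algebraic inequality that makes Gajewski's method work (Lemma 10 of \cite{ZaJu17}, giving nonnegativity of $\int_\Omega u_0\sum_i\bigl(|\nabla\sqrt{u_i}|^2+|\nabla\sqrt{v_i}|^2-2|\nabla\sqrt{(u_i+v_i)/2}|^2\bigr)dx$) requires a \emph{common} weight $u_0$ in front of all three terms; with $u_0\neq v_0$ one instead obtains expressions of the form $u_0|\nabla\sqrt{u_i}|^2+v_0|\nabla\sqrt{v_i}|^2-2\bar u_0|\nabla\sqrt{\bar u_i}|^2$, which have no sign. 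The paper explicitly remarks that the method works only for linear mobilities, and this is exactly why it proceeds in two steps: first, using $D_i=1$ and $z_i=z$, the sum $u_0$ satisfies a \emph{closed} scalar drift--diffusion equation coupled to Poisson, and uniqueness of $(u_0,\Phi)$ is proved by an elementary $L^2$ estimate (test with $u_0-v_0$, use $u_0(1-u_0)-v_0(1-v_0)=(1-u_0-v_0)(u_0-v_0)$, Gagliardo--Nirenberg, the hypothesis $\nabla\Phi\in L^\infty(0,T;L^q)$ with $q>d$, the elliptic estimate for $\Phi-\Psi$, and Gronwall). Only then, with $u_0$ and $\Phi$ fixed known coefficients, is Gajewski's semimetric applied to $\pa_t u_i=\diver(u_0\na u_i-u_i\na F_i)$, where the mobility is genuinely linear in the single unknown $u_i$ and the dissipation is nonnegative. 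Your proposal never exploits this decoupling, which is the essential structural idea of the proof.

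A second, related weak point is the closing estimate. You assert that the leftover remainder can be bounded by $C\,\mathcal{E}(t)$ via H\"older, Sobolev embedding, and a Csisz\'ar--Kullback inequality, but the remainder pairs $\na\Phi$, $\na\Psi$, and $\na W$ against \emph{gradients} of $u_i$ and $v_i$; Csisz\'ar--Kullback controls only $\|u_i-v_i\|_{L^1}^2$ by the entropy, not any gradient, and in the degenerate regime (where $u_0$ may vanish) the dissipation does not control $\na u_i$ either. In the paper the corresponding remainder in Step 2 is not estimated by the semimetric at all: after the $\eps$-regularization $h_\eps(s)=(s+\eps)(\log(s+\eps)-1)+1$ it carries the factors $\frac{u_i+v_i}{u_i+v_i+2\eps}-\frac{u_i}{u_i+\eps}$, which tend to zero pointwise, so the entire remainder vanishes as $\eps\to0$ by dominated convergence, using only $\na F_i\cdot\na u_i\in L^1(Q_T)$. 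Without either the decoupling or this vanishing mechanism, the differential inequality $\mathcal{E}(T)\le C\int_0^T\mathcal{E}(t)\,dt$ is not established, and the proof does not close.
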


The proof is a combination of standard $L^2(\Omega)$-type estimates and
the entropy method of Gajewski \cite{Gaj94}. In fact, equations \eqref{1.eq}
partially decouple because of the assumptions $D_i=1$ and $z_i=z$.
Summing \eqref{1.eq} over $i=1,\ldots,n$, we find that $(u_0,\Phi)$ solves
\begin{equation}\label{1.aux}
  \pa_t u_0 = \diver\big(\na u_0 - u_0(1-u_0)(\beta z\na\Phi+\na W)\big), \quad
	-\lambda^2\Delta\Phi = z(1-u_0)+f(x),
\end{equation}
where $W=\sum_{i=1}^n W_i$.
The uniqueness of solutions is shown by taking two solutions $(u_0,\Phi)$ and
$(v_0,\Psi)$ and using $u_0-v_0$ as a test function in the first equation of
\eqref{1.aux}. Then, with the Gagliardo-Nirenberg inequality and the
hypothesis $\na\Phi\in L^q(\Omega)$, we show that
$$
  \frac{d}{dt}\int_\Omega(u_0-v_0)(t)^2 dx \le C(\Phi)\int_\Omega(u_0-v_0)^2 dx,
$$
where $C(\Phi)>0$ depends on the $W^{1,q}(\Omega)$ norm of $\Phi$.
Hence, Gronwall's lemma yields $u_0=v_0$ and consequently, $\Phi=\Psi$.

The next step is to show, for given $u_0$ and $\Phi$, that $u_i$ is the unique
solution to \eqref{1.eq}. Since we cannot expect that $\na u_i\in L^q(\Omega)$,
$q>d$, for $d\ge 3$, we employ the technique of Gajewski \cite{Gaj94} which avoids
this regularity. The method seems to work only for linear mobilities $u_i$,
which is the reason why we cannot apply it to \eqref{1.aux}.
The idea is to introduce the semimetric
$$
  d(u,v) = \int_\Omega\sum_{i=1}^n\bigg(h(u_i)+h(v_i)-2h\bigg(\frac{u_i+v_i}{2}\bigg)
	\bigg)dx \ge 0,
$$
where $h(s)=s(\log s-1)+1$, and to show that $\pa_t d(u,v)\le 0$. Since
$d(u(0),v(0))=0$, this implies that $d(u(t),v(t))=0$ for $t>0$
and consequently, $u(t)=v(t)$. Since expressions like $\log u_i$ are undefined
when $u_i=0$, we need to regularize the semimetric. For details, we refer to
Section \ref{sec.unique}.

\begin{remark}\rm
1.~ The regularity $u_i\in L^2(0,T;H^1(\Omega))$
holds if $u_0$ is strictly positive. A standard idea for the proof is to 
employ $\min\{0,u_0-me^{-\lambda t}\}^p$ as a test function in the first equation
of \eqref{1.aux}, where
$\inf_{\Gamma_D}u_0^D\ge m>0$ and $\lambda>0$ is sufficiently large, 
and to pass after some estimations to the limit $p\to \infty$. 
We leave the details to the reader;
see, e.g., \cite{GlHu02} for a proof in a related situation.

2.~The regularity condition $\Phi(t)\in W^{1,q}(\Omega)$ with $q>d$ is satisfied if 
$d\le 3$, $\pa\Omega\in C^{1,1}$, and the Dirichlet and Neumann boundary do not meet,
$\Gamma_D\cap\overline{\Gamma}_N=\emptyset$ \cite[Theorem~3.29]{Tro87}.
It is also satisfied in up to three space dimensions if $\pa\Omega\in C^3$, 
$\overline{\Gamma}_D\cap\overline{\Gamma}_N\in C^3$, and $\Phi^D\in
W^{1-1/q,q}(\Gamma_D)$, $q>d$ \cite{Sha68}.
\qed
\end{remark}

The paper is organized as follows. The existence theorem is proved in
Section \ref{sec.ex}, while the uniqueness result is shown in Section 
\ref{sec.unique}. The numerical solution in one space dimension
and its large-time behavior is illustrated in Section \ref{sec.num}. 
The entropy variables $\pa h/\pa u_i$ are computed in the Appendix.


\section{Existence of solutions}\label{sec.ex}

We consider first the nonlinear Poisson equation
$$
  -\lambda^2\Delta\Phi = \sum_{i=1}^n z_i u_i(w,\Phi) + f, \quad
	u_i(w,\Phi) = \frac{\exp(w_i-\beta z_i\Phi-W_i)}{1 + \sum_{j=1}^n
	\exp(w_j-\beta z_j\Phi-W_j)}
$$
in $\Omega$
with the boundary conditions \eqref{1.bc2} for given $w_i\in L^\infty(\Omega)$. 
Then $(x,\Phi)\mapsto u_i(w(x),\Phi)$ is a bounded function with values in $(0,1)$
and a standard fixed-point argument shows that  
this problem has a weak solution $\Phi\in H^1(\Omega)$.
Since $\Phi\mapsto u_i(w,\Phi)$ is Lipschitz continuous, this solution is unique.
By the maximum principle and $f\in L^\infty(\Omega)$, 
we have $\Phi\in L^\infty(\Omega)$.
Note that $u(w(x),\Phi(x))\in\DD$ for $x\in\Omega$. Therefore, the following
estimate holds:
\begin{equation}\label{2.Phi}
  \|\Phi\|_{H^1(\Omega)} \le C(1+\|\Phi^D\|_{H^1(\Omega)}),
\end{equation}
where $C>0$ depends on $\lambda$, $z_i$, and $\|f\|_{L^2(\Omega)}$.

{\em Step 1: Solution to an approximate problem.} Let $T>0$, $N\in\N$, $\tau=T/N>0$,
and $m\in\N$ such that $m>d/2$. Then the embedding $H^m(\Omega)\hookrightarrow
L^\infty(\Omega)$ is compact. Let $v^{k-1}:=w^{k-1}-w^{D}\in
H^1_D(\Omega;\R^n)\cap L^\infty(\Omega;\R^n)$, 
$\Phi^{k-1}-\Phi^D\in H^1_D(\Omega)$ be given. 
If $k=1$, we set $v^0=h'(u^0)-w^D$ and let $\Phi^0$ be the weak solution 
to $-\lambda^2\Delta\Phi^0=\sum_{i=1}^n z_iu_i^0+f(x)$
in $\Omega$ with boundary conditions \eqref{1.bc2}. Our aim is to find 
$v^k\in H^1_D(\Omega;\R^n)\cap H^m(\Omega;\R^n)$, $\Phi^k-\Phi^D\in H^1_D(\Omega)$ 
such that
\begin{align}
  & \frac{1}{\tau}\int_\Omega\big(u(v^k+w^D,\Phi^k) - u(v^{k-1}+w^D,\Phi^{k-1})
	\big)\cdot\phi dx \nonumber \\
	&\phantom{xxxx}{}+ \int_\Omega\na\phi:B(v^k+w^D,\Phi^k)\na(v^k+w^D)dx 
	\nonumber \\
	&\phantom{xxxx}{}
	+ \eps\int_\Omega\bigg(\sum_{|\alpha|=m}D^\alpha v^k\cdot D^\alpha\phi
	+ v^k\cdot\phi\bigg)dx = 0, \label{2.tau1} \\
	& \lambda^2\int_\Omega\na\Phi^k\cdot\na\theta dx = \int_\Omega\bigg(\sum_{i=1}^n
	z_i u_i(v^k+w^D,\Phi^k)+f\bigg)\theta dx \label{2.tau2}
\end{align}
for all $\phi\in H^1_D(\Omega;\R^n)$ and $\theta\in H^1_D(\Omega)$.
Here, $\alpha=(\alpha_1,\ldots,\alpha_n)\in\N_0^n$ is a multi-index, 
$|\alpha|=\alpha_1+\cdots+\alpha_n$, $D^\alpha = \pa^{|\alpha|}/\pa x_1^{\alpha_1}
\cdots\pa x_n^{\alpha_n}$ is a partial derivative, and ``:'' denotes
the matrix product with summation over both indices. Since the matrix $B$ is
diagonal, we may write the second integral in \eqref{2.tau1} as
\begin{align*}
  \int_\Omega\na\phi & :B(v^k+w^D,\Phi^k)\na(v^k+w^D)dx \\
	&= \int_\Omega\sum_{i=1}^n D_iu_0(v^k+w^D,\Phi^k)u_i(v^k+w^D,\Phi^k)\na\phi_i
	\cdot\na(v_i^k+w_i^D)dx.
\end{align*}

\begin{lemma}[Existence of weak solutions to the time-discrete problem]
\label{lem.approx}
Let the assumptions of Theorem \ref{thm.ex} hold and let $w^D\in H^m(\Omega;\R^n)$. 
Then there exists a weak
solution $v^k=w^k-w^D\in H^1_D(\Omega;\R^n)\cap H^m(\Omega;\R^n)$, 
$\Phi^k-\Phi^D\in H^1_D(\Omega)$ to \eqref{2.tau1}-\eqref{2.tau2}, and
the following discrete entropy production inequality holds:
\begin{align}
  H(u^k) &+ \tau\int_\Omega\na (w^k-w^D):B(w^k,\Phi^k)\na w^k dx \nonumber \\
	&{}+ \eps\tau C_P\|w^k-w^D\|_{H^m(\Omega)}^2 \le H(u^{k-1}), 
	\label{2.epi}
\end{align}
where $H$ is defined in \eqref{1.H},
$u^k=u(w^k,\Phi^k)$, $u^{k-1}=u(w^{k-1},\Phi^{k-1})$, and $C_P>0$
is the constant of the generalized Poincar\'e inequality 
\cite[Chap.~II.1.4, Formula (1.39)]{Tem97}.
\end{lemma}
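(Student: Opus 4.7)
The plan is a Leray--Schauder fixed-point argument applied to a linearization of \eqref{2.tau1}, with the a priori bound supplied by the discrete entropy inequality obtained from testing against $\phi=w^k-w^D$. First, decouple Poisson: for each $w\in L^\infty(\Omega;\R^n)$ the paragraph preceding Step 1 produces a unique $\Phi[w]\in H^1(\Omega)\cap L^\infty(\Omega)$ solving \eqref{2.tau2} with $\Phi[w]-\Phi^D\in H^1_D(\Omega)$, and $w\mapsto\Phi[w]$ is Lipschitz into $L^\infty$; hence it suffices to solve \eqref{2.tau1} alone with $\Phi^k=\Phi[w^k]$.

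For the fixed-point setup, given $(y,\sigma)\in L^\infty(\Omega;\R^n)\times[0,1]$, I would define $v=S(y,\sigma)\in V:=H^1_D(\Omega;\R^n)\cap H^m(\Omega;\R^n)$ as the unique Lax--Milgram solution of
\begin{align*}
&\eps\int_\Omega\bigg(\sum_{|\alpha|=m}D^\alpha v\cdot D^\alpha\phi+v\cdot\phi\bigg)dx+\int_\Omega\na\phi:B(y+w^D,\Phi[y+w^D])\na v\,dx \\
&\quad=-\sigma\int_\Omega\na\phi:B(y+w^D,\Phi[y+w^D])\na w^D\,dx \\
&\qquad-\frac{\sigma}{\tau}\int_\Omega\big(u(y+w^D,\Phi[y+w^D])-u^{k-1}\big)\cdot\phi\,dx,\quad\phi\in V.
\end{align*}
Existence and uniqueness are immediate because $u_i,u_0\in(0,1)$ are bounded, $B$ is diagonal positive semidefinite, and the $\eps$-term alone delivers coercivity in an equivalent norm on $V$. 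The map $S$ is continuous by Lipschitz dependence on $y$ of both $\Phi[\cdot]$ and $u(\cdot,\Phi[\cdot])$; it is compact into $L^\infty(\Omega;\R^n)$ because $m>d/2$ yields $H^m\hookrightarrow L^\infty$ compactly, and $S(\cdot,0)\equiv 0$.

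The key a priori bound comes from testing any fixed point $v^\sigma=S(v^\sigma,\sigma)$ with $\phi=v^\sigma$. Writing $w=v^\sigma+w^D$, $\Phi=\Phi[w]$, $u=u(w,\Phi)$, and multiplying by $\tau$ yields
\begin{align*}
\sigma\int_\Omega(u-u^{k-1})\cdot(w-w^D)\,dx
&+\tau\int_\Omega\na(w-w^D):B(w,\Phi)\na w\,dx \\
&{}+\eps\tau\bigg(\sum_{|\alpha|=m}\|D^\alpha v^\sigma\|_{L^2}^2+\|v^\sigma\|_{L^2}^2\bigg)=0.
\end{align*}
By Lemma \ref{lem.ev}, $\pa h/\pa u_i=w_i-w_i^D$, and $h$ is convex in $u$: the primitives of $\log$ are strictly convex, $\sum_i u_iW_i$ is affine, and---because the lift $-\lambda^2\Delta\Phi^D=f$ prescribed in (A4) makes $u\mapsto\Phi-\Phi^D$ linear---the electric energy $\tfrac{\beta\lambda^2}{2}|\na(\Phi-\Phi^D)|^2$ integrates to a non-negative quadratic form in $u$. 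Hence $H(u)-H(u^{k-1})\le\int(u-u^{k-1})\cdot(w-w^D)dx$, while the generalized Poincar\'e inequality supplies $C_P\|v^\sigma\|_{H^m}^2\le\|v^\sigma\|_{L^2}^2+\sum_{|\alpha|=m}\|D^\alpha v^\sigma\|_{L^2}^2$. Combined with $B\ge 0$, this produces a $\sigma$-uniform $H^m$-bound on $v^\sigma$; Leray--Schauder at $\sigma=1$ then yields the sought $v^k$, and evaluating the same inequality at $\sigma=1$ is exactly \eqref{2.epi}.

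The principal obstacle is the convexity of $h$: because $\Phi$ couples implicitly to $u$ through Poisson, the electric energy is a priori only a non-local functional of $u$, and it is the particular Dirichlet lift in (A4) together with Lemma \ref{lem.ev} that render it a clean non-negative quadratic form and furnish the identity $\pa h/\pa u_i=w_i-w_i^D$ needed to close the convexity step. A secondary technical point is the $L^\infty$-continuity of $S$; it is secured only because the $\eps$-regularization upgrades weak $V$-bounds to $L^\infty$ compactness via $H^m\hookrightarrow L^\infty$, which is also what legitimizes treating $u(w,\Phi[w])$ pointwise as a bounded, continuous function of $w$ throughout the fixed-point argument.
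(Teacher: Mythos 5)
Your argument is correct and follows essentially the same route as the paper: decouple the Poisson equation, set up a Leray--Schauder fixed point whose linearized problem is solved by Lax--Milgram with coercivity supplied solely by the $\eps$-regularization in $H^m$, and then obtain \eqref{2.epi} by testing with $w^k-w^D$ and using the convexity of the mixing entropy together with the quadratic (hence convex) structure of the electric energy, exactly as in the paper's splitting of $h$ into $g$, the Poisson term, and the affine $W$-term. The only real difference is that the paper gets the $\delta$-uniform bound on fixed points from the cruder estimate $a(v,v)=F(v)\le C(\tau)\|v\|_{H^m(\Omega)}$ (using only $0\le u_i\le 1$ and boundedness of $B$) rather than from the entropy inequality; your variant also works, though for $\delta<1$ your displayed identity should carry the factor $\delta$ on the $\na w^D$ contribution to the $B$-term, a slip that does not affect the bound since that term is controlled by $\|\na w^D\|_{L^2(\Omega)}$ anyway.
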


\begin{proof}
We employ the Leray-Schauder fixed-point theorem. For this, let
$y\in L^\infty(\Omega)$ and $\delta\in[0,1]$. Let $\Phi^k-\Phi^D\in H^1_D(\Omega)$
be the unique weak solution to the nonlinear problem
$$
  \lambda^2\int_\Omega\na\Phi^k\cdot\na\theta dx = \int_\Omega\bigg(\sum_{i=1}^n
	z_i u_i(y+w^D,\Phi^k)+f\bigg)\theta dx
$$
for $\theta\in H^1_D(\Omega)$. Since $y\in L^\infty(\Omega)$, the expression
$u_i(y+w^D,\Phi^k)$ is well-defined. Next, let
$X=H^1_D(\Omega;\R^n)\cap H^m(\Omega;\R^n)$ and consider the linear problem
\begin{equation}\label{2.LM}
  a(v,\phi)=F(\phi)\quad\mbox{for all }\phi\in X,
\end{equation}
where
\begin{align*}
  a(v,\phi) &= \int_\Omega\na\phi:B(y+w^D,\Phi^k)\na v dx
	+ \eps\int_\Omega\bigg(\sum_{|\alpha|=m}D^\alpha v\cdot D^\alpha\phi
	+ v\cdot\phi\bigg)dx, \\
	F(\phi) &= -\frac{\delta}{\tau}\int_\Omega\big(u(y+w^D,\Phi^k)
	- u(v^{k-1}+w^D,\Phi^{k-1})\big)\cdot\phi dx \\
	&\phantom{xx}{}- \delta\int_\Omega\na\phi:B(y+w^D,\Phi^k)\na w^D dx.
\end{align*}
The bilinear form $a$ and the linear form $F$ are continuous on $X$.
Furthermore, using the positive semi-definiteness of the matrix $B$ and
the generalized Poincar\'e inequality with constant $C_P>0$
\cite[Chap.~II.1.4, Formula (1.39)]{Tem97}, $a$ is coercive:
$$
  a(v,v) \ge \eps\int_\Omega\bigg(\sum_{|\alpha|=m}|D^\alpha v|^2 + |v|^2\bigg)dx
	\ge \eps C_P\|v\|_{H^m(\Omega)}^2.
$$
By the lemma of Lax-Milgram, there exists a unique
solution $v\in X\subset L^\infty(\Omega;\R^n)$ to \eqref{2.LM}. 
For later reference, we observe that, since the continuity constant for $F$ 
does not depend on $y$,
\begin{equation}\label{2.est}
  C(\eps)\|v\|_{H^m(\Omega)}^2 \le a(v,v) = F(v) \le C(\tau)\|v\|_{H^m(\Omega)},
\end{equation}
which gives a bound for $v$ in $H^m(\Omega)$ which is independent of $y$
and $\delta$. 

This defines the fixed-point operator
$S:L^\infty(\Omega;\R^n)\times[0,1]\to L^\infty(\Omega;\R^n)$, $S(y,\delta)=v$.
It clearly holds that $S(y,0)=0$ for all $y\in L^\infty(\Omega;\R^n)$.
The continuity of $S$ follows from standard arguments; see, e.g., the proof
of Lemma 5 in \cite{Jue15}. In view of the compact embedding $H^m(\Omega)
\hookrightarrow L^\infty(\Omega)$, $S$ is also compact. 
The uniform estimate for all fixed points of $S(\cdot,\delta)$ follows from
\eqref{2.est}. Thus, by the
Leray-Schauder fixed-point theorem, there exists $v^k\in X$ such that
$S(v^k,1)=v^k$ and $w^k:=v^k+w^D$, $\Phi^k$ solve \eqref{2.tau1}-\eqref{2.tau2}.

It remains to prove inequality \eqref{2.epi}. To this end, we employ
$\tau(w^k-w^D)\in X$ as a test function in the weak formulation of \eqref{2.tau1}.
Again, we set $u^k=u(w^k,\Phi^k)$, $u^{k-1}=u(w^{k-1},\Phi^{k-1})$. Then
\begin{align}
  \int_\Omega(u^k-u^{k-1})\cdot (w^k-w^D) dx
	&+ \tau\int_\Omega\na (w^k-w^D):B(w^k,\Phi^k)\na w^k \nonumber \\
	&{}+ \eps\tau\|w^k-w^D\|_{H^m(\Omega)}^2 \le 0. \label{2.aux}
\end{align}
To estimate the first integral, we take $x\in\Omega$ and set
$$
  g(u) = \sum_{i=0}^n\int_{u_i^D(x)}^{u_i}\log\frac{s}{u_i^D(x)}ds, \quad u\in\R^n,
$$
where we recall that $u_0^D=1-\sum_{i=1}^n u_i^D$. 
Then $(\pa g/\pa u_i)(u)=\log(u_i/u_i^D)-\log(u_0/u_0^D)$ and $g$ is convex. Hence,
$g(u^k)-g(u^{k-1})\le g'(u^k)\cdot(u^k-u^{k-1})$ or
$$
  \int_\Omega(g(u^k)-g(u^{k-1}))dx \le \int_\Omega\sum_{i=1}^n
	(u^k_i-u^{k-1}_i)\bigg(\log\frac{u_i^k}{u_0^k}-\log\frac{u_i^D}{u_0^D}\bigg)dx.
$$
Moreover, we infer from the Poisson equation that
\begin{align*}
  \beta\int_\Omega \sum_{i=1}^n & z_i(u^k_i-u^{k-1}_i)(\Phi^k-\Phi^D)dx
	= -\beta\lambda^2\int_\Omega\Delta(\Phi^k-\Phi^{k-1})(\Phi^k-\Phi^D)dx \\
	&= \beta\lambda^2\int_\Omega\na\big((\Phi^k-\Phi^D)-(\Phi^{k-1}-\Phi^D)\big)
	\cdot\na(\Phi^k-\Phi^D)dx \\
	&\ge \frac{\beta\lambda^2}{2}\int_\Omega|\na(\Phi^k-\Phi^D)|^2dx
	- \frac{\beta\lambda^2}{2}\int_\Omega|\na(\Phi^{k-1}-\Phi^D)|^2dx.	
\end{align*}
In view of these estimates, the first term in \eqref{2.aux} becomes
\begin{align*}
  \int_\Omega & (u^k-u^{k-1})\cdot (w^k-w^D) dx \\
	&= \int_\Omega\sum_{i=1}^n(u^k_i-u^{k-1}_i)\bigg(\log\frac{u_i^k}{u_0^k}
	- \log\frac{u_i^D}{u_0^D} + \beta z_i(\Phi^k-\Phi^D) + W_i\bigg)dx \\
  &\ge H(u^k) - H(u^{k-1}).
\end{align*}
We infer from \eqref{2.aux} that \eqref{2.epi} holds.
\end{proof}

{\em Step 2: A priori estimates.} Let $(w^k,\Phi^k)$ be a weak solution
to \eqref{2.tau1}-\eqref{2.tau2}. Then $u^k(x)=u(w^k(x),\Phi^k(x))\in\DD$
for $x\in\Omega$, so $(u^k)$ is bounded uniformly in $(\eps,\tau)$.

\begin{lemma}[A priori estimates]\label{lem.est}
The following estimates hold:
\begin{align}
  \|u_i^k\|_{L^\infty(\Omega)} + \eps\tau\sum_{j=1}^k\|w_i^j\|_{H^m(\Omega)}^2
	&\le C, \label{2.est1} \\
	\tau\sum_{j=1}^k\Big(\|(u_0^j)^{1/2}\|_{H^1(\Omega)}^2 + \|u_0^j\|_{H^1(\Omega)}^2 
	+ \|(u_0^j)^{1/2}\na(u_i^j)^{1/2}\|_{L^2(\Omega)}^2\Big) &\le C, \label{2.est2}
\end{align}
where here and in the following, $C>0$ is a generic constant independent of
$\eps$ and $\tau$. 
\end{lemma}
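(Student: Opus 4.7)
My plan is to iterate the discrete entropy-production inequality \eqref{2.epi} from $j=1$ to $k$ and read off each asserted bound from the resulting cumulative estimate. The $L^\infty$ part of \eqref{2.est1} is immediate, because by construction $u^k(x)=u(w^k(x),\Phi^k(x))\in\overline{\DD}$ a.e., so $0\le u_i^k\le 1$. For the $\eps\tau$-sum, recall that the free energy $H$ is bounded from below on $\overline{\DD}$: the integrals $\int_{u_i^D}^{u_i}\log(s/u_i^D)\,ds$ are bounded on $[0,1]$ (since $\log s$ is integrable near $s=0$), the term $\tfrac{\beta\lambda^2}{2}|\na(\Phi-\Phi^D)|^2$ is nonnegative, and $|u_iW_i|\le\|W_i\|_{L^\infty}$. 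Telescoping \eqref{2.epi} and discarding the nonnegative $B$-weighted term yields $\eps\tau\sum_{j=1}^k\|w^j-w^D\|_{H^m(\Omega)}^2\le H(u^0)+C$, and \eqref{2.est1} follows by the triangle inequality together with $w^D\in H^m$.

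For \eqref{2.est2}, the same telescoping produces
$$
\tau\sum_{j=1}^k\int_\Omega\sum_{i=1}^n D_iu_0^ju_i^j\,\na w_i^j\cdot\na(w_i^j-w_i^D)\,dx\le C.
$$
Splitting $\na w_i^j\cdot\na(w_i^j-w_i^D)\ge\tfrac12|\na w_i^j|^2-\tfrac12|\na w_i^D|^2$ and absorbing the negative piece using the pointwise bound $D_iu_0^ju_i^j\le\tfrac14 D_i$ and $w^D\in H^1$ yields a uniform bound on $\tau\sum_j\int_\Omega\sum_i D_iu_0^ju_i^j|\na w_i^j|^2\,dx$. To pass from $\na w_i^j$ to $\na\log(u_i^j/u_0^j)$ I would use the identity $\na w_i=\na\log(u_i/u_0)+\beta z_i\na\Phi+\na W_i$ together with Young's inequality
$$
|\na w_i|^2\ge\tfrac12|\na\log(u_i/u_0)|^2-|\beta z_i\na\Phi+\na W_i|^2,
$$
again weighted by $D_iu_0^ju_i^j\le C$. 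The residual is controlled by $C\tau\sum_j(\|\na\Phi^j\|_{L^2}^2+\|\na W\|_{L^2}^2)$, and the first sum is bounded uniformly in $(\eps,\tau)$ by the Poisson estimate \eqref{2.Phi}.

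Finally, I would apply the algebraic identity
$$
\sum_{i=1}^n u_iu_0\,|\na\log(u_i/u_0)|^2=4u_0\sum_{i=1}^n|\na u_i^{1/2}|^2+|\na u_0|^2+4|\na u_0^{1/2}|^2,
$$
quoted in the introduction, to convert the bound just obtained into the three $L^2$-estimates on $(u_0^j)^{1/2}\na(u_i^j)^{1/2}$, $\na u_0^j$, and $\na(u_0^j)^{1/2}$. Combined with the $L^\infty$ bound $u_0^j\in[0,1]$, which trivially controls the $L^2$-parts of $\|u_0^j\|_{H^1}$ and $\|(u_0^j)^{1/2}\|_{H^1}$, this delivers \eqref{2.est2}.

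The main obstacle will be the careful bookkeeping of the lower-order contributions generated by the Dirichlet datum $w^D$ and by the Poisson coupling: the constant $C(w^D)$ implicit in \eqref{1.epi} must be dominated by the hypotheses (A3)--(A4), and this relies essentially on the uniform $H^1$-bound \eqref{2.Phi} for $\Phi^j$, which in turn uses the special extension of $\Phi^D$ prescribed in (A4). A smaller subtlety is that the algebraic identity is only meaningful on $\{u_0>0\}$; since both sides vanish in the limit $u_0\to 0$, extension by continuity to the full domain is harmless but should be recorded when applying it inside integrals.
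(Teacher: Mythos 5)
Your proposal is correct and follows essentially the same route as the paper: telescoping the discrete entropy inequality \eqref{2.epi}, splitting off the $\na w^D$ and drift contributions by Young's inequality with the weights $D_iu_0^ju_i^j\le C$, invoking the uniform Poisson bound \eqref{2.Phi}, and converting via the algebraic identity \eqref{2.sqrt}. The only details worth recording in a write-up are the lower bound for $H$ on $\overline{\DD}$ (which you correctly flag) and the factor $\tau k\le T$ when summing the $j$-independent remainder terms.
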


\begin{proof}
We need to estimate the second term on the left-hand side of the 
entropy-production inequality \eqref{2.epi}.
Since $B(w^k,\Phi^k)=\mbox{diag}(D_i u_i^ku_0^k)$, we obtain
\begin{align*}
  \na (w^k-w^D):B(w^k,\Phi^k)\na w^k
	&= \sum_{i=1}^n D_iu_i^ku_0^k|\na w_i^k|^2
	- \sum_{i=1}^n D_iu_i^ku_0^k\na w^D_i\cdot\na w_i^k \\
	&\ge \frac{D_{\rm min}}{2}\sum_{i=1}^n u_i^ku_0^k|\na w_i^k|^2
	- \frac{D_{\rm max}}{2}\sum_{i=1}^n|\na w_i^D|^2,
\end{align*}
where $D_{\rm min}=\min_{i=1,\ldots,n}D_i$, $D_{\rm max}=\max_{i=1,\ldots,n}D_i$,
and we used the fact that $0\le u_0^k,\,u_i^k\le 1$ in $\Omega$. Furthermore,
by definition \eqref{1.w} of the entropy variables,
$$
  |\na w_i^k|^2 
	= \bigg|\na\log\frac{u_i^k}{u_0^k} + \na(\beta z_i\Phi^k+W_i)\bigg|^2 
	\ge \frac12\bigg|\na\log\frac{u_i^k}{u_0^k}\bigg|^2
	- |\na(\beta z_i\Phi+W_i)|^2.
$$
Inserting these inequalities into \eqref{2.epi}, it follows that
\begin{align*}
  H(u^k) &+ \tau\frac{D_{\rm min}}{4}\int_\Omega\sum_{i=1}^n u_i^ku_0^k
	\bigg|\na\log\frac{u_i^k}{u_0^k}\bigg|^2 dx
	+ \eps\tau C_P\|w^k-w^D\|_{H^m(\Omega)}^2 \\
	&\le H(u^{k-1}) 
	+ \tau\frac{D_{\rm min}}{2}\int_\Omega\sum_{i=1}^n|\na(\beta z_i\Phi^k+W_i)|^2 dx 
	+ \tau\frac{D_{\rm max}}{2}\int_\Omega\sum_{i=1}^n|\na w_i^D|^2 dx.
\end{align*}
We resolve this recursion to find that
\begin{align*}
  H(u^k) &+ \tau\frac{D_{\rm min}}{4}\sum_{j=1}^k
	\int_\Omega\sum_{i=1}^n u_i^ju_0^j
	\bigg|\na\log\frac{u_i^j}{u_0^j}\bigg|^2 dx
	+ \eps\tau C_P\sum_{j=1}^k \|w^j-w^D\|_{H^m(\Omega)}^2 \\
	&\le H(u^0) 
	+ \tau\frac{D_{\rm min}}{2}\sum_{j=1}^k\int_\Omega\sum_{i=1}^n
	|\na(\beta z_i\Phi^j+W_i)|^2 dx 
	+ \tau k\frac{D_{\rm max}}{2}\int_\Omega\sum_{i=1}^n|\na w_i^D|^2 dx.
\end{align*}
Because of the $H^1(\Omega)$
estimate \eqref{2.Phi} for the electric potential and $\tau k\le T$, 
the right-hand side is uniformly bounded. Furthermore, using $\sum_{i=1}^n u_i^j
= 1-u_0^j$, 
\begin{align}
  \sum_{i=1}^n u_i^ju_0^j\bigg|\na\log\frac{u_i^j}{u_0^j}\bigg|^2
	&= 4u_0^j\sum_{i=1}^n|\na(u_i^j)^{1/2}|^2 - 2\na u_0^j\sum_{i=1}^n\na u_i^j
	+ 4|\na(u_0^j)^{1/2}|^2\sum_{i=1}^n u_i^j \nonumber \\
	&= 4u_0^j\sum_{i=1}^n|\na(u_i^j)^{1/2}|^2 + 2|\na u_0^j|^2
	+ 4|\na(u_0^j)^{1/2}|^2 - 4u_0^j|\na(u_0^j)^{1/2}|^2 \nonumber \\
	&= 4u_0^j\sum_{i=1}^n|\na(u_i^j)^{1/2}|^2 + |\na u_0^j|^2 + 4|\na(u_0^j)^{1/2}|^2.
	\label{2.sqrt}
\end{align}
This finishes the proof.
\end{proof}

{\em Step 3: Limit $\eps\to 0$.} We cannot perform the simultaneous limit
$(\eps,\tau)\to 0$ since we need an Aubin-Lions compactness result, 
which requires a uniform estimate for the discrete time
derivative of the concentrations in $H_D^1(\Omega;\R^n)'$ and not in the larger
space $X'=(H_D^1(\Omega;\R^n)\cap H^m(\Omega;\R^n))'$.
Let $k\in\{1,\ldots,N\}$ be fixed and let
$u_i^{(\eps)}=u_i^k$ and $\Phi^{(\eps)}=\Phi^k$ be a weak solution to
\eqref{2.tau1}-\eqref{2.tau2}. Set $u_0^{(\eps)}=1-\sum_{i=1}^n u_i^{(\eps)}$.
By Lemma \ref{lem.est}, there exist subsequences
of $(u_i^{(\eps)})$ and $(\Phi^{(\eps)})$, which are not relabeled, such that,
as $\eps\to 0$,
\begin{align}
  u_i^{(\eps)} \rightharpoonup^* u_i &\quad\mbox{weakly* in }L^\infty(\Omega),
	\label{2.eps0} \\
  (u_0^{(\eps)})^{1/2}\rightharpoonup u_0^{1/2}, 
	\quad \Phi^{(\eps)}\rightharpoonup\Phi
	&\quad\mbox{weakly in }H^1(\Omega), \ i=1,\ldots,n, \label{2.H1} \\
	u_0^{(\eps)}\to u_0, \quad \Phi^{(\eps)}\to \Phi
	&\quad\mbox{strongly in }L^2(\Omega), \label{2.eps1} \\
	\eps w_i^{(\eps)}\to 0 &\quad\mbox{strongly in }H^m(\Omega). \label{2.weps}
\end{align}
We have to pass to the limit $\eps\to 0$ in
\begin{align*}
  \int_\Omega & \na\phi:B(w^{(\eps)},\Phi^{(\eps)})\na w^{(\eps)} dx
	= \int_\Omega\sum_{i=1}^n D_i u_i^{(\eps)}u_0^{(\eps)}\na w_i^{(\eps)}
	\cdot\na\phi_i dx \\
	&= \int_\Omega\sum_{i=1}^n D_i\big(u_0^{(\eps)}\na u_i^{(\eps)}
	- u_i^{(\eps)}\na u_0^{(\eps)} + u_i^{(\eps)} u_0^{(\eps)}
	(\beta z_i\na\Phi^{(\eps)}+\na W_i)\big)	\cdot\na\phi_i dx \\
	&= \int_\Omega\sum_{i=1}^n D_i\Big((u_0^{(\eps)})^{1/2}\na\big(u_i^{(\eps)}
	(u_0^{(\eps)})^{1/2}\big) 
	- 3u_i^{(\eps)}(u_0^{(\eps)})^{1/2}\na(u_0^{(\eps)})^{1/2} \\
	&\phantom{xx}{}
	+ \beta z_i u_i^{(\eps)} u_0^{(\eps)}(\beta z_i\na\Phi^{(\eps)}+\na W_i)\Big)
	\cdot\na\phi_i dx.
\end{align*}

We claim that $u_i^{(\eps)}(u_0^{(\eps)})^{1/2}\rightharpoonup u_iu_0^{1/2}$
weakly in $H^1(\Omega)$. First, we observe that, because of \eqref{2.eps0} and
\eqref{2.eps1}, $u_i^{(\eps)}(u_0^{(\eps)})^{1/2}\rightharpoonup u_iu_0^{1/2}$
weakly in $L^2(\Omega)$. Then the claim follows from the bound
\begin{align}
  \big\|\na\big(u_i^{(\eps)}(u_0^{(\eps)})^{1/2}\big)\big\|_{L^2(\Omega)}
	&\le \|u_i^{(\eps)}\|_{L^\infty(\Omega)}\|\na(u_0^{(\eps)})^{1/2}\|_{L^2(\Omega)} 
	\nonumber \\
	&\phantom{xx}{}+ 2\|(u_i^{(\eps)})^{1/2}\|_{L^\infty(\Omega)}\|(u_0^{(\eps)})^{1/2}
	\na(u_i^{(\eps)})^{1/2}\|_{L^2(\Omega)} \le C, \label{2.na}
\end{align}
using \eqref{2.est2}. The compact embedding $H^1(\Omega)\hookrightarrow
L^2(\Omega)$ implies that
\begin{equation*}
  u_i^{(\eps)}(u_0^{(\eps)})^{1/2} \to u_iu_0^{1/2} \quad\mbox{strongly in }
	L^2(\Omega),
\end{equation*}
and by the $L^\infty(\Omega)$ bounds, this convergence also holds in $L^p(\Omega)$
for $p<\infty$. This shows that, taking into account \eqref{2.H1},
\begin{align*}
  (u_0^{(\eps)})^{1/2} & \na\big(u_i^{(\eps)}(u_0^{(\eps)})^{1/2}\big)
	- 3u_i^{(\eps)}(u_0^{(\eps)})^{1/2}\na(u_0^{(\eps)})^{1/2} \\
	&\rightharpoonup u_0^{1/2}\na(u_iu_0^{1/2}) - 3u_iu_0^{1/2}\na u_0^{1/2}
	\quad\mbox{weakly in }L^1(\Omega).
\end{align*}
In fact, since this sequence is bounded in $L^2(\Omega)$, the weak convergence
also holds in $L^2(\Omega)$.
Furthermore, by \eqref{2.eps1}, possibly for a subsequence,
$$
  u_i^{(\eps)}u_0^{(\eps)}\na\Phi^{(\eps)}
	\rightharpoonup u_iu_0\na\Phi \quad\mbox{weakly in }L^1(\Omega),
$$
and this convergence holds also in $L^2(\Omega)$. 

Then, performing the limit $\eps\to 0$ in \eqref{2.tau1}-\eqref{2.tau2} leads to
\begin{align}
  & \frac{1}{\tau}\int_\Omega(u^k-u^{k-1})\cdot\phi dx 
	+ \int_\Omega\sum_{i=1}^n D_i(u_0^k)^{1/2}
	\big(\na(u_i^k(u_0^k)^{1/2}) - 3u_i^k\na (u_0^k)^{1/2}\big)
	\cdot\na\phi_i dx \nonumber \\
	&\phantom{xx}{}
	+ \int_\Omega\sum_{i=1}^n D_i u_i^ku_0^k\big(\beta z_i\na\Phi^k+\na W_i\big)
	\cdot\na \phi_i dx,	\label{2.tau3} \\
	& \lambda^2\int_\Omega\na\Phi^k\cdot\na\theta dx 
	= \int_\Omega\bigg(\sum_{i=1}^n z_iu_i^k + f\bigg)\theta dx, \label{2.tau4}
\end{align}
for all $\phi=(\phi_1,\ldots,\phi_n)\in X$ and 
$\theta\in H_D^1(\Omega)$, where $u^k:=u$ and $\Phi^k:=\Phi$.
A density argument shows that we may take $\phi\in H^1_D(\Omega;\R^n)$.

By the trace theorem, $\Phi^k-\Phi^D\in H^1_D(\Omega)$. To show that also 
$u_i^k-u_i(w^D,\Phi^D)\in H_D^1(\Omega;\R^n)$ holds, we observe that
$w^{(\eps)}=w^D$ on $\Gamma_D$ and therefore, $u_0^{(\eps)}=u_0^D$ on $\Gamma_D$
in the sense of traces, where $u_0^D=1-\sum_{i=1}^n u_i^D$ and
$u_i^D:=u_i(w^D,\Phi^D)$. Since
$u_i^{(\eps)}(u_0^{(\eps)})^{1/2}=u_i^D(u_0^D)^{1/2}$ on $\Gamma_D$
and $\na(u_i^{(\eps)}(u_0^{(\eps)})^{1/2})\rightharpoonup \na(u_iu_0^{1/2})$ 
weakly in $L^2(\Omega)$ (see \eqref{2.na}), the
trace theorem implies that $u_iu_0^{1/2}=u_i^D(u_0^D)^{1/2}$ on $\Gamma_D$.

In Lemma \ref{lem.approx}, we have assumed that $w^D\in H^m(\Omega;\R^n)$
since we have taken $w^k-w^D\in X$ as a test function. We may take a sequence
of functions $(w^D_\delta)$ in $H^m(\Omega;\R^n)$ approximating $w^D$ and
then pass to the limit $\delta\to 0$
to achieve the result for $w^D\in H^1(\Omega;\R^n)$.

{\em Step 4: Limit $\tau\to 0$.} Let $u^{(\tau)}(x,t)=u^k(x)$ and 
$\Phi^{(\tau)}(x,t)=\Phi^k(x)$ for $x\in\Omega$ and $t\in((k-1)\tau,k\tau]$,
$k=1,\ldots,N$, be piecewise in time constant functions. At time $t=0$, we set 
$u^{(\tau)}(\cdot,0)=u^0$. We introduce the shift operator
$(\sigma_\tau u^{(\tau)})(\cdot,t)=u^{k-1}$ for $t\in((k-1)\tau,k\tau]$.
Then, in view of \eqref{2.tau3}-\eqref{2.tau4}, $(u^{(\tau)},\Phi^{(\tau)})$ solves
\begin{align}
  & \frac{1}{\tau}\int_\Omega(u^{(\tau)}-\sigma_\tau u^{(\tau)})\cdot\phi dxdt 
	\nonumber \\
	&\phantom{xx}{}+ \int_0^T\int_\Omega\sum_{i=1}^n D_i
	\Big((u_0^{(\tau)})^{1/2}\na\big(u_i^{(\tau)}(u_0^{(\tau)})^{1/2}\big) 
	- 3u_i^{(\tau)}(u_0^{(\tau)})^{1/2}
	\na(u_0^{(\tau)})^{1/2}\Big)\cdot\na\phi_i dxdt \nonumber \\
	&\phantom{xx}{}+ \int_0^T\int_\Omega\sum_{i=1}^n D_i u_i^{(\tau)}u_0^{(\tau)}
	\big(\beta z_i\na\Phi^{(\tau)} + \na W_i\big)\cdot\na\phi_i dxdt = 0, 
	\label{2.tau5} \\
	& \lambda^2\int_0^T\int_\Omega\na\Phi^{(\tau)}\cdot\na\theta dxdt
	= \int_0^T\int_\Omega\bigg(\sum_{i=1}^n z_iu_i^{(\tau)} + f\bigg)\theta dxdt
	\label{2.tau6}
\end{align}
for all piecewise constant functions $\phi_i$, $\theta:(0,T)\to H_D^1(\Omega)$.

Lemma \ref{lem.est} provides the following uniform bounds:
\begin{align}\label{2.uH1}
  \|u_i^{(\tau)}\|_{L^\infty(Q_T)} + \|(u_0^{(\tau)})^{1/2}\|_{L^2(0,T;H^1(\Omega))}
	+ \|u_0^{(\tau)}\|_{L^2(0,T;H^1(\Omega))} &\le C, \\
	\|u_i^{(\tau)}(u_0^{(\tau)})^{1/2}\|_{L^2(0,T;H^1(\Omega))} &\le C, \label{2.uiu0}
\end{align}
where $Q_T=\Omega\times(0,T)$ and $C>0$ is independent of $\tau$.
Moreover, 
$$
  \|\Phi^{(\tau)}\|_{L^2(0,T;H^1(\Omega))}^2 
	= \tau\sum_{k=1}^N \|\Phi^k\|_{H^1(\Omega)}^2
	\le \tau NC \le TC.
$$

We wish to derive a uniform bound for the discrete time derivative of
$(u_i^{(\tau)})$. To this end, we estimate
\begin{align*}
  \frac{1}{\tau}&\bigg|\int_\Omega(u^{(\tau)}-\sigma_\tau u^{(\tau)})
	\cdot\phi dxdt\bigg| 
	\le \int_0^T\sum_{i=1}^n D_i\|u_0^{(\tau)}\|_{L^\infty(\Omega)}^{1/2} \\
	&\phantom{xx}{}\times
	\Big(\|\na(u_i^{(\tau)}(u_0^{(\tau)})^{1/2})\|_{L^2(\Omega)}
	+ 3\|u_i^{(\tau)}\|_{L^\infty(\Omega)}
	\|\na(u_0^{(\tau)})^{1/2}\|_{L^2(\Omega)}
	\Big)\|\na\phi_i\|_{L^2(\Omega)}dt \\
	&\phantom{xx}{}+ \int_0^T\sum_{i=1}^n D_i
	\|u_i^{(\tau)}u_0^{(\tau)}\|_{L^\infty(\Omega)}
	\Big(\beta|z_i|\|\na\Phi^{(\tau)}\|_{L^2(\Omega)}+\|\na W_i\|_{L^2(\Omega)}\Big)
	\|\na\phi_i\|_{L^2(\Omega)}dt \\
	&\le C.
\end{align*}
This holds for all piecewise constant functions $\phi_i:(0,T)\to H_D^1(\Omega)$.
By a density argument, we obtain
\begin{equation}\label{2.utau}
  \tau^{-1}\|u_i^{(\tau)}-\sigma_\tau u_i^{(\tau)}\|_{L^2(0,T;H^1_D(\Omega)')} \le C,
	\quad i=1,\ldots,n.
\end{equation}
Summing these estimates for $i=1,\ldots,n$, we also have
\begin{equation}\label{2.u0tau}
  \tau^{-1}\|u_0^{(\tau)}-\sigma_\tau u_0^{(\tau)}\|_{L^2(0,T;H^1_D(\Omega)')}\le C.
\end{equation}

{}From these estimates, we conclude that, as $\tau\to 0$, up to a subsequence, 
\begin{align*}
  u_i^{(\tau)} \rightharpoonup^* u_i &\quad\mbox{weakly* in }L^\infty(Q_T), \\
	\Phi^{(\tau)}\rightharpoonup \Phi &\quad\mbox{weakly in }L^2(0,T;H^1(\Omega)), \\
	\tau^{-1}(u_i^{(\tau)}-\sigma_\tau u_i^{(\tau)})\rightharpoonup \pa_t u_i
	&\quad\mbox{weakly in }L^2(0,T;H^1_D(\Omega)'),\ i=1,\ldots,n.
\end{align*}
Taking into account \eqref{2.uH1} and \eqref{2.u0tau}, 
we can apply the Aubin-Lions lemma in the version of \cite{DrJu12} to $(u_0^{(\tau)})$
to obtain the existence of a subsequence, which is not relabeled, such that
$u_0^{(\tau)}\to u_0$ strongly in $L^2(Q_T)$,
and this convergence even holds in $L^p(Q_T)$ for $p<\infty$. As a consequence,
\begin{equation}\label{2.u0L2}
  (u_0^{(\tau)})^{1/2}\to u_0^{1/2} \quad\mbox{strongly in }L^p(Q_T),\ p<\infty.
\end{equation}
Thus, by \eqref{2.uH1}, up to a subsequence,
$$
  \na (u_0^{(\tau)})^{1/2}\rightharpoonup \na u_0^{1/2}\quad \mbox{weakly in }L^2(Q_T).
$$

We cannot infer the strong convergence of $(u_i^{(\tau)})$ because of the
degeneracy occurring in estimate \eqref{2.uiu0}. The idea is to employ
the Aubin-Lions lemma in the ``degenerate'' version of \cite{BDPS10,Jue15}
(also see the Appendix in \cite{Jue16}).
In view of \eqref{2.u0L2}, the $L^2(0,T;H^1(\Omega))$ estimates for 
$(u_i^{(\tau)}(u_0^{(\tau)})^{1/2})$ and $((u_0^{(\tau)})^{1/2})$ 
(see \eqref{2.uH1}-\eqref{2.uiu0}), as well as estimate \eqref{2.utau},
there exists a subsequence (not relabeled) such that
\begin{equation}\label{2.conv}
  u_i^{(\tau)}(u_0^{(\tau)})^{1/2} \to u_iu_0^{1/2} \quad\mbox{strongly in }L^2(Q_T).
\end{equation}
Taking into account the uniform bound \eqref{2.uiu0}, we also have
$$
  \na\big(u_i^{(\tau)}(u_0^{(\tau)})^{1/2}\big) \rightharpoonup
	\na(u_iu_0^{1/2}) \quad\mbox{weakly in }L^2(Q_T).
$$
This shows that
\begin{align*}
  (u_0^{(\tau)})^{1/2}\na\big(u_i^{(\tau)}(u_0^{(\tau)})^{1/2}\big) 
	- 3u_i^{(\tau)}(u_0^{(\tau)})^{1/2}
	\na(u_0^{(\tau)})^{1/2}
	\rightharpoonup u_0^{1/2}\na(u_iu_0^{1/2}) - 3u_iu_0^{1/2}\na u_0^{1/2}
\end{align*}
weakly in $L^1(Q_T)$. Furthermore, by \eqref{2.u0L2} and \eqref{2.conv},
$$
  u_i^{(\tau)}u_0^{(\tau)}
  = u_i^{(\tau)}(u_0^{(\tau)})^{1/2}\cdot(u_0^{(\tau)})^{1/2}\to u_iu_0
	\quad\mbox{strongly in }L^2(Q_T).
$$

These convergences allow us to perform the limit $\tau\to 0$ in 
\eqref{2.tau5}-\eqref{2.tau6} to find that $(u_i,\Phi)$ solves
\eqref{1.weak1}-\eqref{1.weak2} for all smooth test functions. By a density
argument, we may take test functions from $L^2(0,T;H^1_D(\Omega))$.
We can show as in Step 3 that the Dirichlet boundary conditions are satisfied, and
the initial condition $u_i(\cdot,0)=u^0_i$ in $\Omega$ follows from arguments
similar as at the end of the proof of Theorem 2 in \cite{Jue15}.


\section{Uniqueness of weak solutions}\label{sec.unique}

We prove Theorem \ref{thm.unique}. For this, we proceed in two steps.

{\em Step 1.} Adding \eqref{1.eq} from $i=1,\ldots,n$ and taking into account
the assumptions $D_i=1$ and $z_i=z$, we find that $u_0=1-\sum_{i=1}^n u_i$ solves
\begin{equation}\label{4.eq}
  \pa_t u_0 = \diver\big(\na u_0 - u_0(1-u_0)(\beta z\na\Phi+\na W)\big), \quad
	-\lambda^2\Delta\Phi = z(1-u_0) + f(x)
\end{equation}
in $\Omega$, $t>0$, 
where $W=\sum_{i=1}^n W_i$, together with the initial conditions 
$u_0(\cdot,0)=1-\sum_{i=1}^n u_i^0$ and boundary conditions
\eqref{1.bc2} and 
$$
  \big(\na u_0 - u_0(1-u_0)(\beta z\na\Phi+\na W)\big)\cdot\nu=0\quad\mbox{on }\Gamma_N,
	\quad u_0=1-\sum_{i=1}^n u_i^D\quad\mbox{on }\Gamma_D.
$$
We show that this problem has a unique weak
solution $(u_0,\Phi)$ in the class of functions 
$\Phi\in L^\infty(0,T;W^{1,q}(\Omega))$.

Let $(u_0,\Phi)$ and $(v_0,\Psi)$ be two weak solutions to \eqref{4.eq} with
the corresponding initial and boundary conditions such that $\Phi$, $\Psi\in
L^\infty(0,T;W^{1,q}(\Omega))$. We take $u_0-v_0$ as a test function in the
weak formulation of the difference of \eqref{4.eq} satisfied by $u_0$ and $v_0$,
respectively. Then
\begin{align}
  \frac12 & \int_\Omega (u_0-v_0)^2(t) dx
	+ \int_0^t\int_\Omega|\na(u_0-v_0)|^2 dxds \nonumber \\
	&= \int_0^t\int_\Omega \Big(u_0(1-u_0)(\beta z\na\Phi+\na W)\big)
	- v_0(1-v_0)(\beta z\na\Psi+\na W)\big)\Big) \nonumber \\
	&\phantom{xx}{}\times\na(u_0-v_0)dxds \nonumber \\
  &= \int_0^t\int_\Omega\big(u_0(1-u_0)-v_0(1-v_0)\big)(\beta z\na\Phi+\na W)
	\cdot\na(u_0-v_0)dxds \nonumber \\
	&\phantom{xx}{}
	+ \beta z\int_0^t\int_\Omega v_0(1-v_0)\na(\Phi-\Psi)\cdot\na(u_0-v_0)dxds 
	\nonumber \\
	&=: I_1 + I_2. \label{4.aux}
\end{align}
The first integral is estimated using the identity 
$u_0(1-u_0)-v_0(1-v_0)=(1-u_0-v_0)(u_0-v_0)$ and 
H\"older's inequality with $1/p+1/q+1/2=1$, where $q>d$ 
(and $2<p<\infty$ if $d\le 2$):
\begin{align*}
  I_1 &\le \|1-u_0-v_0\|_{L^\infty(Q_t)}\|u_0-v_0\|_{L^2(0,t;L^p(\Omega))}
	\|\beta z\na\Phi+\na W\|_{L^\infty(0,t;L^q(\Omega))} \\
	&\phantom{xx}{}\times\|\na(u_0-v_0)\|_{L^2(0,t;L^2(\Omega))} \\
	&\le \frac14\|\na(u_0-v_0)\|_{L^2(Q_t)}^2
	+ C\|u_0-v_0\|_{L^2(0,t;L^p(\Omega))}^2.
\end{align*}
By the Gagliardo-Nirenberg inequality with $\theta=d/2-d/p\in(0,1)$,
\begin{align*}
  \int_0^t\|u_0-v_0\|_{L^p(\Omega)}^2 ds
	&\le C\int_0^t\|u_0-v_0\|_{H^1(\Omega)}^{2\theta}
	\|u_0-v_0\|_{L^2(\Omega)}^{2(1-\theta)}ds \\
	&\le C\int_0^t\big(\|\na(u_0-v_0)\|_{L^2(\Omega)}^{2\theta} 
	+ \|u_0-v_0\|_{L^2(\Omega)}^{2\theta}\big)\|u_0-v_0\|_{L^2(\Omega)}^{2(1-\theta)}ds \\
	&\le \frac14\int_0^t\|\na(u_0-v_0)\|_{L^2(\Omega)}^2ds
	+ C\int_0^t\|u_0-v_0\|_{L^2(\Omega)}^2ds.
\end{align*}
This shows that
$$
  I_1 \le \frac12\|\na(u_0-v_0)\|_{L^2(Q_t)}^2
	+ C\|u_0-v_0\|_{L^2(Q_t)}^2.
$$

For the remaining integral, we employ the following elliptic estimate
$$
  \|\na(\Phi-\Psi)\|_{L^2(\Omega)}\le C\|(1-u_0)-(1-v_0)\|_{L^2(\Omega)}
	= C\|u_0-v_0\|_{L^2(\Omega)},
$$
such that
\begin{align*}
  I_2 &\le \beta|z|\|v_0(1-v_0)\|_{L^\infty(Q_t)}\|\na(\Phi-\Psi)\|_{L^2(Q_t)}
  \|\na(u_0-v_0)\|_{L^2(Q_t)} \\
	&\le C\|u_0-v_0\|_{L^2(Q_t)}\|\na(u_0-v_0)\|_{L^2(Q_t)}
	\le \frac12\|\na(u_0-v_0)\|_{L^2(Q_t)}^2 + \frac{C}{2}\|u_0-v_0\|_{L^2(Q_t)}^2.
\end{align*}
Then, inserting the estimates for $I_1$ and $I_2$ into \eqref{4.aux} leads to
$$
  \frac12\int_\Omega (u_0-v_0)^2(t) dx
	\le C\int_0^t\int_\Omega(u_0-v_0)^2dxds,
$$
and we conclude with Gronwall's lemma that $u_0=v_0$. Consequently, by
the Poisson equation in \eqref{4.eq}, $\Phi=\Psi$.

{\em Step 2.} Next, we show that $u_1,\ldots,u_n$ is the unique weak solution to
\eqref{1.eq}, written in the form
\begin{equation}\label{4.eq2}
  \pa_t u_i = \diver(u_0\na u_i - u_i\na F_i), \quad i=1,\ldots,n,
\end{equation}
where $F_i=u_0+\beta z\Phi+W_i$,
and $(u_0,\Phi)$ is the unique solution to \eqref{4.eq}, together with the
corresponding initial and boundary conditions. 
Since we have assumed that $u_i\in L^2(0,T;H^1(\Omega))$, the formulation
\eqref{1.eq} can be used instead of \eqref{1.weak1}. 
The classical uniqueness proof
requires that $\na F_i\in L^\infty(0,T;L^q(\Omega))$; see the first step of this
proof. To avoid this condition, we use the entropy method of Gajewski
\cite{Gaj94,GaSk04}. 

Let $u=(u_1,\ldots,u_n)$ and $v=(v_1,\ldots,v_n)$ be two weak solutions to
\eqref{4.eq2} with initial and boundary conditions \eqref{1.bc1} and \eqref{1.ic}.
We introduce the semimetric
$$
  d_\eps(u,v) = \int_\Omega\sum_{i=1}^n\bigg(h_\eps(u_i) + h_\eps(v_i)
	- 2h_\eps\bigg(\frac{u_i+v_i}{2}\bigg)\bigg)dx,
$$
where $h_\eps(s)=(s+\eps)(\log(s+\eps)-1)+1$ for $s\ge 0$. 
 The regularization with $\eps>0$ is needed to avoid that expressions like 
$\log(u_i)$ are undefined if $u_i=0$. 
Since $h_\eps$ is convex, we have $h_\eps(u_i)+h_\eps(v_i)-2h_\eps((u_i+v_i)/2)\ge 0$
in $\Omega$ and hence, $d_\eps(u,v)\ge 0$. Now, using \eqref{4.eq2}, we compute, 
similarly as in \cite{ZaJu17},
\begin{align*}
  \frac{d}{dt}d_\eps(u,v) 
	&= \sum_{i=1}^n\bigg\{\bigg\langle\pa_t u_i,
	h'_\eps(u_i)-h'_\eps\bigg(\frac{u_i+v_i}{2}\bigg)\bigg\rangle
	+ \bigg\langle\pa_t v_i,h'_\eps(v_i)-h'_\eps\bigg(\frac{u_i+v_i}{2}\bigg)
	\bigg\rangle\bigg\} \\
	&= -\int_\Omega\sum_{i=1}^n\bigg\{\big(u_0\na u_i-u_i\na F_i)\cdot
	\bigg(h''_\eps(u_i)\na u_i - \frac12 h''_\eps\bigg(\frac{u_i+v_i}{2}\bigg)
	\na(u_i+v_i)\bigg) \\
	&\phantom{xx}{}+ \big(u_0\na v_i-v_i\na F_i)\cdot
	\bigg(h''_\eps(v_i)\na v_i - \frac12 h''_\eps\bigg(\frac{u_i+v_i}{2}\bigg)
	\na(u_i+v_i)\bigg)\bigg\}dx.
\end{align*}
Rearranging these terms, we arrive at
\begin{align*}
  \frac{d}{dt}d_\eps(u,v) 
	&= -4\int_\Omega u_0\sum_{i=1}^n\Big(|\na\sqrt{u_i+\eps}|^2
	+ |\na\sqrt{v_i+\eps}|^2 - 2|\na\sqrt{u_i+v_i+2\eps}|^2\Big)dx \\
	&\phantom{xx}{}- \int_\Omega\sum_{i=1}^n\bigg(\frac{u_i+v_i}{u_i+v_i+2\eps}
	- \frac{u_i}{u_i+\eps}\bigg)\na F_i\cdot\na u_i dx \\
	&\phantom{xx}{}- \int_\Omega\sum_{i=1}^n\bigg(\frac{u_i+v_i}{u_i+v_i+2\eps}
	- \frac{v_i}{v_i+\eps}\bigg)\na F_i\cdot\na v_i dx.
\end{align*}
Lemma 10 in \cite{ZaJu17} shows that the first integral is nonnegative.
Therefore, integrating the above identity in time and observing that
$d_\eps(u(0),v(0))=0$, we obtain
\begin{align*}
  d_\eps(u(t),v(t)) 
	&\le - \int_0^t\int_\Omega\sum_{i=1}^n\bigg(\frac{u_i+v_i}{u_i+v_i+2\eps}
	- \frac{u_i}{u_i+\eps}\bigg)\na F_i\cdot\na u_i dxds \\
	&\phantom{xx}{}- \int_0^t\int_\Omega\sum_{i=1}^n\bigg(\frac{u_i+v_i}{u_i+v_i+2\eps}
	- \frac{v_i}{v_i+\eps}\bigg)\na F_i\cdot\na v_i dxds.
\end{align*}
Arguing as in \cite[Section 6]{ZaJu17}, the dominated convergence theorem shows that
$d_\eps(u(t),v(t))$ $\to 0$ as $\eps\to 0$ (here, we use $\na F_i\in L^2(Q_T)$). 
Then, since a Taylor expansion of $h_\eps$
gives
$$
  d_\eps(u(t),v(v)) \ge \frac18\sum_{i=1}^n\|u_i(t)-v_i(t)\|_{L^2(\Omega)}^2,
$$
we infer that $u_i(t)=v_i(t)$ in $\Omega$ for $t>0$, $i=1,\ldots,n$, which
finishes the proof.


\section{Numerical simulations}\label{sec.num}

We illustrate numerically the behavior of the solutions to \eqref{1.eq}-\eqref{1.poi}
for a specific type of ion channel modeled in \cite{GNE02}. First, our numerical 
scheme is verified by comparing our stationary solutions to the profiles obtained in 
\cite{BSW12}. Second, we explore the large-time behavior of the numerical
solutions.

\subsection{Numerical method}

The equations are discretized in time by an implicit Euler method and in space
by a finite-volume scheme. We suppose that $\Omega=(0,1)$ and impose Dirichlet
boundary conditions.

For the finite volume discretization, the domain is divided into uniform cells of 
size $h>0$. The concentrations and the potential are piecewise constant in each cell
with values $u_{i,m}^k$ and $\Phi_{m}^k$, respectively, where $i=1,\ldots,n$,
$m=1,\ldots,M$, at time $k\triangle t$, $k=1,\ldots,K$. These values are
determined by the following system of nonlinear equations:
\begin{align}
  \label{5.num_u}
  h\frac{u^k_{i,m}-u^{k-1}_{i,m}}{\Delta t} &= J^k_{i,m+1/2} - J^k_{i,m-1/2}, \\
  \label{5.num_phi}
  -\frac{\lambda^2}{h}(\Phi^k_{m+1}-2\Phi^k_m+\Phi^k_{m-1}) 
	&= h \bigg( \sum\limits_{i=1}^{n}z_i u^k_{i,m} + f_m \bigg),
\end{align}
for $i=1,\ldots,n$, $m=1,\ldots,M$, and $k=1,\ldots,K$. The Dirichlet boundary 
conditions are accounted for by setting $\Phi^k_{0}=\Phi^{D}(0)$ and 
$\Phi^k_{M+1}=\Phi^{D}(1)$, and similarly for the concentrations. 
Furthermore, we set $f_m=\frac{1}{h}\int_{(m-1)h}^{mh}f\,dx$, and the fluxes 
$J^k_{i,m\pm 1/2}$ from cell $m$ to cell $m\pm 1$ are given by
\begin{align*}
  J^k_{i,m\pm 1/2}
	&= \pm\frac{D_i}{h}\Big( u^k_{0,m\pm 1/2}(u^k_{i,m\pm 1}-u^k_{i,m}) 
	-u^k_{i,m\pm 1/2}(u^k_{0,m\pm 1}-u^k_{0,m}) \\
  &\phantom{xx}{}+\beta z_iu^k_{i,m\pm 1/2}u^k_{0,m\pm 1/2}
	(\Phi^k_{m\pm 1}-\Phi^k_m) \Big).
\end{align*}
The concentrations at the cell borders are determined by the logarithmic mean
of the cell values:
$$
  u^k_{i,m\pm 1/2} 
	= \begin{cases}\displaystyle
  \frac{u^k_{i,m\pm 1}-u^k_{i,m}}{\log u^k_{i,m\pm 1}-\log u^k_{i,m}} \quad 
	& \text{if }u^k_{i,m\pm 1}>0 \text{ and } u^k_{i,m}>0, \\
  u^k_{i,m} & \text{if }u^k_{i,m\pm 1}=u^k_{i,m}>0, \\
  0 & \text{else}
  \end{cases}
$$
for $i=0,\ldots,n$.
An advantage of this choice is that the fluxes can be reformulated in terms of
the entropy variables
$$
  J^k_{i,m\pm 1/2} = \pm\frac{D_i}{h}u^k_{i,m\pm 1/2}u^k_{0,m\pm 1/2}
	(w^k_{i,m\pm 1}-w^k_{i,m}),
$$
at least if the concentrations are strictly positive. (We do not use this formulation
in the numerical approximation.) The above scheme is implemented using
MATLAB, version R2015a. The nonlinear discrete system 
\eqref{5.num_u}-\eqref{5.num_phi} is solved by a full Newton method in the
variables $u_i^k$ and $\Phi^k$.


\subsection{Simulation of a calcium-selective ion channel}\label{sec.calc}

We consider a model for an L-type calcium channel described in \cite{GNE02} and 
used for numerical simulations also in \cite{BSW12}. We choose a simple geometry, 
where the channel is made of an impermeable cylinder opening up symmetrically 
into two baths, where Dirichlet boundary conditions are prescribed. For the 
simulations, three different types of ions are taken into account: calcium (Ca$^{2+}$, 
$u_1$), sodium (Na$^+$, $u_2$), and chloride (Cl$^-$, $u_3$). The selectivity filter 
of the channel consists in eight confined oxygen ions (O$^{-1/2}$), which contribute 
to the permanent charge density $f=-u_{\text{O}}/2$ as well as to the sum of 
concentrations in the channel, so that $u_0=1-\sum_{i=1}^{3}u_i-u_{\text{O}}$. 
Since these ions are confined, their concentration is assumed to be constant in time.
The concentration profile used in our simulations is a simple piecewise constant 
function, $u_{\text{O}}(x)=0.89$ for $0.45<x<0.55$ and zero else.

In order to obtain results comparable to \cite{BSW12}, we use the same one-dimensional 
approximation of the three-dimensional model that is based on the assumption that the 
longitudinal extension of the considered domain is much larger than the cross section 
of the channel. This leads to the reduced system of equations
\begin{align}\label{5.eq1}
  a(x)\pa_t u_i &= \diver \big(a(x)D_iu_iu_0\na w_i\big), \\ 
	\label{5.eq2}
  -\lambda^2\diver(a(x)\na\Phi) &= a(x)\bigg(\sum_{i=1}^nz_iu_i + f\bigg),
\end{align}
where $a(x)$ is the cross-sectional area of the domain at $x\in (0,1)$. It is given 
by $a(x)=\pi r(x)^2$, where the radius $r(x)$ is determined by the piecewise linear 
function
\begin{equation*}
  r(x)=\begin{cases}
  0.48-x \quad &\text{for }x<0.4\,, \\
  0.08 &\text{for }0.4\le x\le 0.6\,, \\
  x-0.52 &\text{for }x>0.6\,.
\end{cases}
\end{equation*}

For our simulations, we use the parameters given in \cite[Section 5.1, Table 1]{BSW12}.
The initial concentrations are linear functions connecting the Dirichlet boundary
conditions.  The initial potential is then computed from the corresponding Poisson 
equation. The simulations are carried out until the stationary state is reached
approximately, which we determine by computing the $L^2$ error between the solution 
at two consecutive time steps:
$$
  \text{err}_k = \sum_{i=1}^{3}\bigg( \sum_{m=1}^{M} h(u^k_{i,m}-u^{k-1}_{i,m})^2
	\bigg)^{1/2} + \bigg( \sum_{m=1}^{M} h(\Phi^k_{m}-\Phi^{k-1}_{m})^2\bigg)^{1/2}.
$$
The simulation is terminated as soon as $\text{err}_{k}<10^{-13}$. 
We use the time step size $\triangle t=0.001$ and the mesh size $h=0.01$. 

Figure \ref{fig.sol} shows the three ion concentrations and the electric potential
at various time instances. The scaled concentration values are multiplied by
61.5 mol/liter to obtain physical values. 
For small times, there is more sodium than calcium present inside the channel region, 
due to the higher bath and initial concentration of sodium. After some time, 
the sodium inside the channel is replaced by the stronger positively charged calcium. 
For higher initial calcium concentrations, the calcium selectivity of the channel acts 
immediately. The steady-state solution
from our simulation coincides with the stationary profile computed in 
\cite[Figure 5]{BSW12}, which confirms our numerical scheme. The steady state
is reached after 749 time steps, which corresponds to about 23.7 nanoseconds.

\begin{figure}[htb]
\includegraphics[width=175mm]{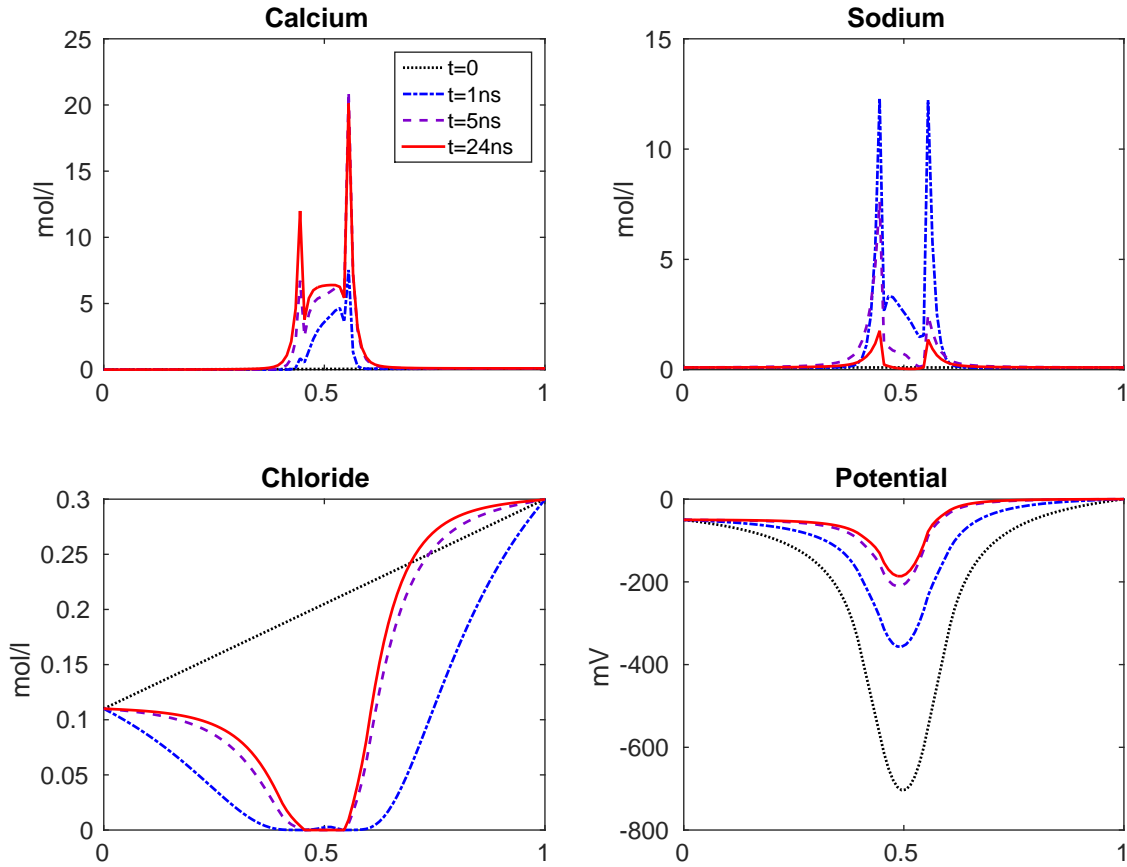}
\caption{Concentrations of calcium, sodium, and chloride ions in mol/l and
electric potential in mV at different times.}
\label{fig.sol}
\end{figure} 


\subsection{Numerical study of the large-time behavior of the solutions}

We investigate numerically the large-time behavior of the solutions and
their decay rates to the equilibrium state. First, we consider the setup
of the previous subsection. Figure \ref{fig.time1} (left) shows the evolution
of the relative entropy \eqref{1.H}, where the boundary data is replaced by
the steady-state solution $(u^\infty,\Phi^\infty)$ (see the previous subsection).  
The right figure displays
the $L^1$ errors $\|u_i^k-u_i^\infty\|_{L^1}$ and $\|\Phi^k-\Phi^\infty\|_{L^1}$
versus the number of time steps $k$. We observe that
the relative entropy converges exponentially fast to the equilibrium state.
By the Csisz\'ar-Kullback inequality (see, e.g., \cite{Jue16} and
references therein), the convergence rate in the $L^1$ norm is expected to
half of that one for the relative entropy, and this is confirmed by 
Figure \ref{fig.time1} (right).

\begin{figure}[htb]
\includegraphics[width=175mm]{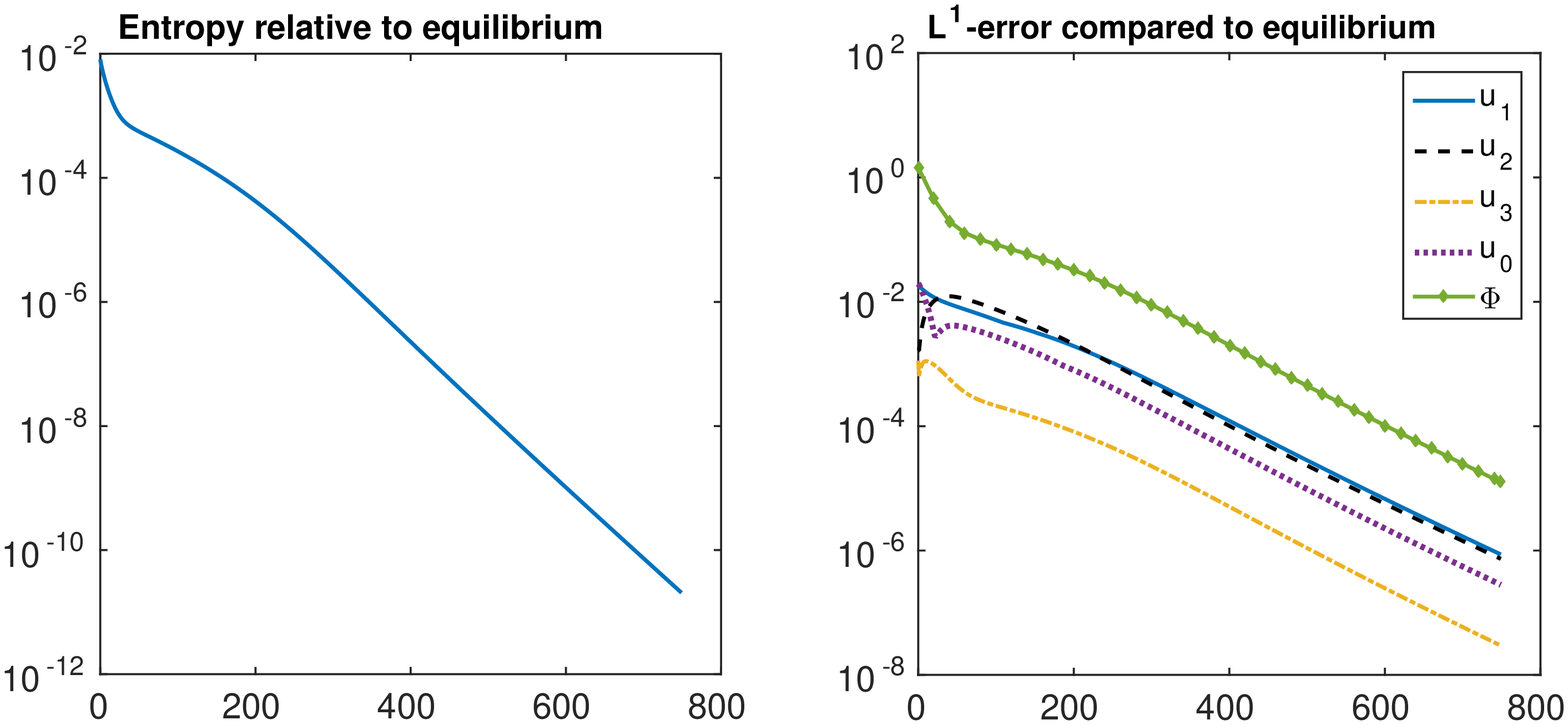}
\caption{Relative entropy (left) and $L^1$ error relative to the steady state
(right) over the number of time steps for the setup of Subsection \ref{sec.calc}.}
\label{fig.time1}
\end{figure} 

Because of the degeneracy at $u_0=0$ in the entropy-production inequality
\eqref{1.epi}, a general proof of exponential convergence rates seems to be
not feasible when the solvent concentration $u_0$ vanishes locally. 
Our second numerical example confirms this statement.
For this, we choose the oxygen concentration
\begin{equation}\label{5.uO}
  u_O(x) = \left\{\begin{array}{ll}
	0.81 & \quad\mbox{for }0.35<x<0.65, \\
	0    & \quad\mbox{else}.
	\end{array}\right.
\end{equation}
All other parameters are kept unchanged.
This choice leads to a solvent concentration $u_0$ that nearly vanishes in a
large part of the computational domain. Consequently, the entropy production 
in \eqref{1.epi} becomes ``small'' and we may expect a rather slow convergence
to equilibrium. Figure \ref{fig.time2} illustrates this behavior. After a short 
initial phase and for the first 20\,000 time steps, the convergence rate is
very small. This comes from the fact that the values of $u_0$ are of the order
$10^{-6}$ in the channel region $x\in[0.4,0.6]$, 
causing the solution to remain nearly unchanged. After about 20\,000 time steps,
the values of $u_0$ increase up to approximately $10^{-3}$ inside the channel
region, which initiates the strong exponential decay to equilibrium.
These results indicate that exponential decay rates cannot be expected
when the solvent concentration vanishes.

\begin{figure}[htb]
\includegraphics[width=175mm]{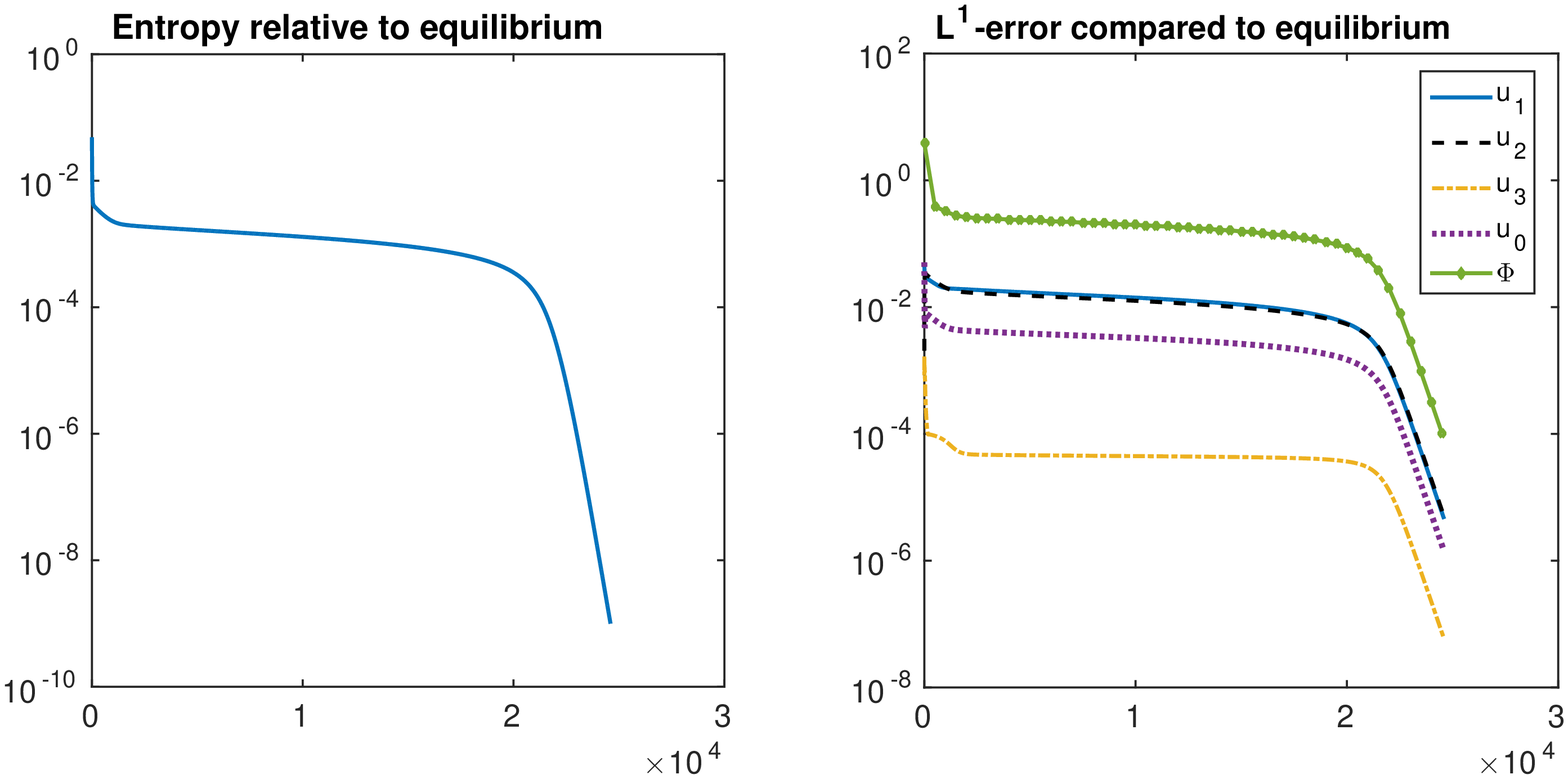}
\caption{Relative entropy (left) and $L^1$ error relative to the steady state
(right) over the number of time steps, computed with the oxygen concentration
\eqref{5.uO}.}
\label{fig.time2}
\end{figure}


\begin{appendix}
\section{Entropy variables}

The appendix is devoted to a (formal) computation of the entropy variables.

\begin{lemma}\label{lem.ev}
Let 
$$
  h(u) = \sum_{i=0}^n\int_{u_i^D}^{u_i}\log\frac{s}{u_i^D}ds
	+ \frac{\beta\lambda^2}{2}|\na(\Phi-\Phi^D)|^2 + \sum_{i=1}^n u_iW_i.
$$
Then
$$
  \frac{\pa h}{\pa u_i} = \log\frac{u_i}{u_0} - \log\frac{u_i^D}{u_0^D}
	+ \beta z_i(\Phi-\Phi^D) + W_i, \quad i=1,\ldots,n.
$$
\end{lemma}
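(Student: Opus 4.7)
The definition of $h$ mixes pointwise and nonlocal dependencies on $u$: the term $|\na(\Phi-\Phi^D)|^2$ depends on $u$ through the Poisson equation \eqref{1.poi} with boundary conditions \eqref{1.bc2}. I therefore interpret $\pa h/\pa u_i$ as the density of the variational derivative of $H(u)=\int_\Omega h(u)\,dx$. The plan is to perturb $u_i\mapsto u_i+\eps\phi$ for an arbitrary $\phi\in C^\infty_c(\Omega)$, let $\Phi_\eps$ denote the solution of \eqref{1.poi} corresponding to the perturbed concentration (with the fixed boundary data from \eqref{1.bc2}), and compute $\frac{d}{d\eps}|_{\eps=0}H$ as a linear functional of $\phi$.

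The first two groups of contributions are purely local. In $\sum_{j=0}^n\int_{u_j^D}^{u_j}\log(s/u_j^D)\,ds$, the $j=i$ summand contributes $\log(u_i/u_i^D)$ and the $j=0$ summand contributes $-\log(u_0/u_0^D)$ (using $\pa u_0/\pa u_i=-1$, which follows from $u_0=1-\sum_{k=1}^n u_k$); the remaining summands vanish. These combine to $\log(u_i/u_0)-\log(u_i^D/u_0^D)$. The term $\sum_{j=1}^n u_jW_j$ trivially contributes $W_i$.

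The only delicate step is the electrostatic term. Set $\psi:=\Phi-\Phi^D$. Assumption (A4) on the extension of $\Phi^D$ (namely $-\lambda^2\Delta\Phi^D=f$ in $\Omega$ and $\na\Phi^D\cdot\nu=0$ on $\Gamma_N$) is precisely what is needed to subtract the two Poisson equations and conclude that $\psi\in H^1_D(\Omega)$ satisfies $-\lambda^2\Delta\psi=\sum_{j=1}^n z_ju_j$ with homogeneous Neumann data on $\Gamma_N$. Let $\xi$ be the Gateaux derivative of $\psi_\eps:=\Phi_\eps-\Phi^D$ in the direction $\phi$ at $\eps=0$; then $\xi\in H^1_D(\Omega)$ weakly solves $-\lambda^2\Delta\xi=z_i\phi$ with the same homogeneous Neumann condition on $\Gamma_N$. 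Hence
$$
  \frac{d}{d\eps}\bigg|_{\eps=0}\frac{\beta\lambda^2}{2}\int_\Omega|\na\psi_\eps|^2\,dx
	= \beta\lambda^2\int_\Omega\na\psi\cdot\na\xi\,dx
	= \beta\int_\Omega z_i\phi\,\psi\,dx,
$$
where the second equality is the symmetric integration by parts: testing the weak form of the equation for $\xi$ against $\psi\in H^1_D(\Omega)$ (or equivalently testing the equation for $\psi$ against $\xi$) is legitimate because boundary terms vanish ($\psi=0$ on $\Gamma_D$ and $\na\xi\cdot\nu=0$ on $\Gamma_N$). The self-adjointness of $-\Delta$ on $H^1_D(\Omega)$ is what localizes the nonlocal contribution to the density $\beta z_i(\Phi-\Phi^D)$. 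Summing the three contributions yields the claimed identity for $\pa h/\pa u_i$.
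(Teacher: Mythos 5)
Your proof is correct and follows essentially the same route as the paper: both arguments compute the Gateaux derivative of the electrostatic energy using the linearity of $u\mapsto\Phi-\Phi^D$ and the self-adjointness of the mixed Dirichlet--Neumann Poisson operator (the paper's identity \eqref{2.Psi}), together with the same local computation for the logarithmic and potential terms. The only cosmetic difference is that the paper first rewrites the electrostatic energy as $\frac{\beta}{2}\int_\Omega\sum_j z_ju_j\Psi[u]\,dx$ before differentiating, whereas you differentiate $\frac{\beta\lambda^2}{2}\int_\Omega|\na\psi_\eps|^2dx$ directly and test the linearized Poisson equation against $\psi$; the two computations are equivalent.
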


\begin{proof}
It is clear that
$$
  \frac{\pa}{\pa u_i}\bigg(\sum_{i=0}^n\int_{u_i^D}^{u_i}\log\frac{s}{u_i^D}ds
	+ \sum_{i=1}^n u_iW_i\bigg)
	= \log\frac{u_i}{u_i^D} - \log\frac{u_0}{u_0^D} + W_i.
$$
Set $H_{\rm el}(u)=(\beta\lambda^2/2)\int_\Omega|\na\Psi[u]|^2dx$,
where $\Psi[u]=\Phi-\Phi^D$. Recall that $\Phi^D$ solves

$-\lambda^2\Delta\Phi^D=f$ in $\Omega$, $\na\Phi^D\cdot\nu=0$ on $\Gamma_N$.
Then $\Psi[u]$ satisfies $-\lambda^2\Delta\Psi[u]=\sum_{i=1}^n z_iu_i$ in
$\Omega$ together with homogeneous mixed boundary conditions and,  
by the Poisson equation \eqref{1.poi},
$$
  H_{\rm el}(u) = -\frac{\beta\lambda^2}{2}
	\int_\Omega\Delta \Psi[u]\Psi[u] dx
	= \frac{\beta}{2}\int_\Omega\sum_{i=1}^n z_iu_i\Psi[u]dx.
$$
Set $h_{\rm el}(u)=(\beta/2)\sum_{i=1}^n z_iu_i\Psi[u]$.
It remains to show that $\pa h_{\rm el}/\pa u_i=\beta z_i\Psi[u]$. 
For this, we observe that for any (smooth) functions $u=(u_i)$, $v=(v_i)$,
\begin{align}
  \int_\Omega \sum_{i=1}^n z_iu_i\Psi[v]dx
	&= -\lambda^2\int_\Omega\Delta\Psi[u]\Psi[v]dx
	= \lambda^2\int_\Omega\na\Psi[u]\cdot\na\Psi[v]dx \nonumber \\
	&= \int_\Omega \sum_{i=1}^n z_iv_i\Psi[u]dx. \label{2.Psi}
\end{align}
Let $e_i$ be the $i$th unit vector in $\R^n$ and $w$ be a smooth scalar function.
Then, using the linearity of $u\mapsto\Psi[u]$ and \eqref{2.Psi},
\begin{align*}
  \lim_{\eps\to 0} & \frac{1}{\eps}\int_\Omega\bigg(
	h_{\rm el}(u+\eps e_iw)-h_{\rm el}(u)-\eps\beta z_iw\Psi[u]\bigg)dx \\
	&= \frac{\beta}{2}\int_\Omega\bigg(\sum_{j=1}^n z_j\delta_{ij}w\Psi[u] 
	+ \sum_{j=1}^n z_j u_j\Psi[e_iw] - 2z_iw\Psi[u]\bigg) \\
	&= \frac{\beta}{2}\int_\Omega \bigg(z_iw\Psi[u] 
	+ \sum_{j=1}^n z_j\delta_{ij}w\Psi[u] - 2z_iw\Psi[u]\bigg)dx = 0,
\end{align*}
which shows the claim.
\end{proof}

\end{appendix}


\end{document}